\newtheorem{theorem}{Theorem}
\newtheorem{lemma}[theorem]{Lemma}
\newtheorem{remark}{Remark}
\title{On 1:3 resonance under reversible perturbations of conservative cubic H\'enon maps}
\author{
M.\,S.\,Gonchenko$^1$, A.\,O.\,Kazakov$^2$, E.\,A.\,Samylina$^{2,3}$, A.\,I.\,Shykhmamedov$^2$		\\
{\small		$^1$ Universitat Polit\`ecnica de Catalunya, Barcelona, Spain}\\
{\small		$^2$ National Research University Higher School of Economics, Nizhny Novgorod, Russia}\\
{\small		$^3$ Lobachevsky State University of Nizhny Novgorod, Nizhny Novgorod, Russia
}\\
{\small { \texttt{marina.gonchenko@upc.edu}, 	 \texttt{kazakovdz@yandex.ru}, 			 \texttt{samylina\_evgeniya@mail.ru},				 
\texttt{aykhansh@gmail.com}	 } }
}
\date{\today}
\begin{document}
	
	\maketitle
\begin{abstract}

We consider reversible non-conservative perturbations of  the conservative cubic H\'enon maps $H_3^{\pm}: \bar x = y, \bar  y = -x + M_1 + M_2 y \pm y^3$ and study their influence on the 1:3 resonance, i.e. bifurcations of fixed points with eigenvalues~$e^{\pm i 2\pi/3}$. It follows from~\cite{DulinMeiss2000}, this resonance is degenerate for~$M_1=0, M_2=-1$ when the corresponding fixed point is elliptic. We show that bifurcations of this point 
under reversible perturbations give rise to four 3-periodic orbits, two of them are symmetric and conservative (saddles in the case of map~$H_3^+$ and elliptic orbits in the case of map~$H_3^-$), the other two orbits are nonsymmetric and they compose symmetric couples of dissipative orbits (attracting and repelling orbits in the case of map~$H_3^+$ and saddles with the Jacobians less than~1 and greater than~1 in the case of map~$H_3^-$). We show that these local symmetry-breaking bifurcations can lead to mixed dynamics due to accompanying global reversible bifurcations of symmetric non-transversal homo- and heteroclinic cycles. We also generalize the results of~\cite{DulinMeiss2000} to the case of the~$p:q$ resonances with odd~$q$ and show that all of them are also degenerate for the 
maps~$H_3^{\pm}$ with~$M_1=0$.

\end{abstract}

\section{Introduction}
	
In the present paper we study how reversible non-conservative perturbations affect the 1:3 resonance, i.e. bifurcations of fixed points with eigenvalues~$e^{\pm i 2\pi/3}=-1/2\pm i \sqrt{3}/2$, in the conservative cubic H\'enon maps
\begin{equation}
	H^{\pm}_3: \; (x,y)\to (\bar x, \bar y)\;:
	\bar x = y,\;\;\;
	\bar y =- x +  M_1 + M_2 y \pm y^3,
	\label{eq:HenonMapCubic}
\end{equation}
where~$x$ and~$y$ are coordinates,~$M_1$ and~$M_2$ are real coefficients. This problem is of great interest since it is closely related to the so-called phenomenon of mixed dynamics~\cite{Gon2016, GonTur2017, GonchenkoGK20}, the third recently discovered type of chaos (in addition to the well-known conservative and dissipative chaos) which is characterized by the principal inseparability of conservative and dissipative elements of dynamics. Indeed, the cubic H\'enon maps~(\ref{eq:HenonMapCubic}) appear as truncated first return maps near cubic homoclinic tangencies in area-preserving diffeomorphisms~\cite{GonGonOvs2017}. Note that the different signs~$\pm 1$ before the cubic term~$y^3$ correspond to two different types of cubic homoclinic tangencies, see~\cite{Gonchenko85,GonSimoViero13, GonGonOvs2017} for more details. In the case of reversible maps, the appearance of such symmetric homoclinic tangencies is a codimension~1 bifurcation phenomenon~\cite{GonchenkoGSin20}. On the other hand, the existence of homoclinic tangencies implies the existence of Newhouse domains where such tangencies, including symmetric cubic homoclinic tangencies in the reversible case, are dense~\cite{GonchenkoGSin20}. In turn, the emergence of symmetric pairs of nonsymmetric orbits provides a criterion of the appearance of mixed dynamics. Namely, in these Newhouse domains, maps with infinitely many periodic sinks (also called stable or attracting orbits), sources (completely unstable or repelling orbits), area-expanding saddles (with the Jacobian greater than 1) and area-contracting saddles (with the Jacobian less than 1) as well as symmetric elliptic periodic orbits are dense.
	
It is widely known that the strong 1:1, 1:2, 1:3 and 1:4 resonances, i.e. bifurcations of fixed points (periodic orbits) with eigenvalues~$e^{\pm 2\pi i/q}$,~$q=1,2,3,4$, respectively, are very important for dynamics. In the conservative setting, the nondegenerate 1:1 resonance is related to a parabolic (elliptic-hyperbolic) bifurcation of fixed (periodic) points that implies the appearance of a pair of saddle and elliptic orbits. In turn, the nondegenerate 1:2 resonance is connected with a conservative period-doubling bifurcation. The 1:3 and 1:4 resonances are most difficult, their theory was outlined in~\cite{Arn-Geom}, where, in particular, a case of the degenerate 1:4 resonance (the so-called ``Arnold degeneracy'') was considered, see also~\cite{ArnKozNeisht07}. A new, second, type of degenerate conservative 1:4 resonance has been found recently in~\cite{GonGonOvs2017, GGOV18}.
As well-known, the complex local normal form of an area-preserving map near a fixed point with eigenvalues~$\pm i$ (1:4 resonance) is written as~$\bar z = i (z + A |z|^2 z + B (z^*)^3) + O(|z|^5)$, where~$A$ and~$B$ are real coefficients. The Arnold degeneracy corresponds to the case~$|A| = |B|$, the other degeneracy from~\cite{GonGonOvs2017} is related to the case~$B=0$. The latter degeneracy is very interesting since it is accompanied by symmetry-breaking (pitchfork) bifurcations of 4-periodic orbits. Note that a class of degenerate~$p:q$ resonances accompanied by symmetry-breaking bifurcations of~$q$-periodic points was described in~\cite{GLRT14}. The 1:4 resonance fits well into this class~\cite{GonGonOvs2017}, while the 1:3 resonance has certain peculiarities. 
In particular, when the map possesses the central symmetry, four 3-periodic orbits appear near the 1:3 resonant fixed point: two of them are elliptic while the other two are saddle. In the present paper we also show that all~$p:q$ resonances with odd~$q\geq 3$ are degenerate for the cubic H\'enon maps with the central symmetry (when~$M_1=0$).
	
The strong resonances often appear in area-preserving maps. For example, in the conservative quadratic H\'enon map~$\bar x = y, \bar y = M - x - y^2$ the structure of the 1:4 resonance was studied in~\cite{Bir87}, where it was shown that this resonance is degenerate (the Arnold case). In~\cite{Bir87} it was also shown that in the quadratic H\'enon map, the 1:3 resonance is nondegenerate, and it mainly consists of the rearrangement of symmetric 3-periodic saddle orbits. Bifurcations in two-parameter families of area-preserving H\'enon-like maps were also studied in~\cite{SV09}.  In particular, in~\cite{SV09} it was demonstrated that the emergence of fixed point with eigenvalues~$e^{\pm i 2\pi/3}$ in the conservative quadratic H\'enon map implies both local instability of the fixed point and global instability of the map, i.e the fixed point becomes saddle with 6 separatrices (local effect) and  almost all orbits close to the fixed point go to infinity (global effect). However, this is not the case when the 1:3 resonance is degenerate. Here, in general, the fixed point is surrounded by a garland (a chain of stability islands) which consists of elliptic and saddle 3-periodic orbits and does not allow orbits to pass far away from the fixed point. This local stability also implies global stability when the 1:3 resonance is near-degenerate. Note that such a situation takes place in the conservative cubic H\'enon maps~(\ref{eq:HenonMapCubic}), see Figures~\ref{Fig:HenonCubicBD_H3+} and~\ref{Fig:HenonCubicBD_H3-}. Here a degenerate 1:3 resonance appears at~$M_1=0$ and~$M_2=-1$, otherwise it is nondegenerate if~$M_1\neq 0$. We also note that all~$p:q$ resonances with odd~$q$ have the same nature: they are degenerate for~$M_1=0$ and the corresponding value of~$M_2$, see Figure~\ref{fig:FigPQRes} for the 1:5 and 1:7 resonances.
	
Our main goal is to study the near-degenerate 1:3 resonance in~(\ref{eq:HenonMapCubic}) and analyze how it is impacted by reversible non-conservative perturbations. It follows from~\cite{DulinMeiss2000} that for~$M_1=0$ and~$M_2=-1$, the~$1:3$ resonance is degenerate for the conservative cubic H\'enon maps and 3-periodic orbits undergo pitchfork bifurcations. We note that both maps~$H^{+}_3$ and~$H^{-}_3$ are conservative and reversible with respect to the involution~$h: (x,y) \to (y, x)$, i.e. by definition the maps~$H^{\pm}_3$ and the inverse maps~$(H^{\pm}_3)^{-1}$ are conjugate by means of the involution~$h$ (the relation~$(H^{\pm}_3)^{-1} = h\circ H^{\pm}_3 \circ h$ holds). In the general case, reversible maps can also have dissipative orbits that always exist in pairs: stable and completely unstable periodic orbits, two saddle periodic orbits with the Jacobian greater than~1 and less than~1, etc. Such orbits are symmetric to each other with respect to the involution. We call them a \emph{symmetric couple of orbits}. When a map is reversible and conservative, symmetric couples of orbits are conservative, however, under general reversible perturbations, these pairs can become dissipative.
		
The genericity of perturbations means that they should destroy the conservativity. Following the paper~\cite{GonchenkoGSaf20}, we construct (analytically) such reversibility preserving perturbations. We present two direct methods to obtain such perturbations for the conservative cubic H\'enon maps~(\ref{eq:HenonMapCubic}). The first method gives reversible non-conservative perturbations in the so-called cross-form~(\ref{eq:RevMap}), and, in this way, we obtain perturbations of the second iteration of the inverse cubic maps~$(H^{\pm}_3)^{-2}$ in the form~(\ref{eq:HSqPerturb}). The second method provides a perturbation of the cubic maps themselves, see the map~(\ref{eq:rev2cub}). We illustrate the bifurcation diagrams for the near-degenerate 1:3 resonance in the perturbed maps and focus on pitchfork bifurcations of 3-periodic orbits which lead to the dissipative dynamics. Namely, we demonstrate that for perturbations of~$H^+_3$ a supercritical pitchfork bifurcation takes place which leads to the emergence of nonsymmetric attracting and repelling 3-periodic orbits, while for perturbed map~$H^-_3$ under a subcritical pitchfork bifurcation there appears a pair of nonsymmetric saddles, one is with the Jacobian greater than~1 and the other is with the Jacobian less than~1.

The fact that mixed dynamics can often appear in applications was shown in~\cite{GonGonKaz2013, GonGonKazTur2017}. Recently, the number of the related results has sharply increased, see e.g.~\cite{Kuz17, Kuz18, EmelianovaN19, Kaz19, ArSch20, EmelianovaN20, BizMam20a, GonchenkoGK20, BizMam20b, EmelianovaN21}. In all these studies, two sides of the phenomenon of mixed dynamics come to light: first, to find it numerically or experimentally, i.e. to provide evidence that attractors and repellers intersect, and, second, to prove it mathematically. As far as we know, only in a couple of papers, see e.g.~\cite{GonGonKazTur2017,Kaz20}, both sides were analyzed. The second side (to prove) is much delicate than the first one, it requires involving various theoretical aspects of mixed dynamics such as criteria for the existence of absolute Newhouse regions~\cite{GonShilTur97, LambStenkin2004, Turaev10, GonchenkoGSin20} and the structure of bifurcation scenarios leading to the appearance of mixed dynamics, see e.g.~\cite{GonGonKazTur2017, Kaz19, Kaz20}. In the present paper we apply both these approaches when studying local and global bifurcations.
	
The paper is organized as follows. In Section~\ref{sec:rsMDsbb} we present the main elements of the theory of reversible systems, mixed dynamics and the associated symmetry-breaking bifurcations that we will use throughout the paper. In Section~\ref{sec:constrRNP} we discuss the two methods to construct reversible non-conservative perturbations of the conservative H\'enon-like maps and prove that the perturbed maps are indeed reversible. In this way, we obtain perturbations of the conservative cubic H\'enon maps~(\ref{eq:HenonMapCubic}). In Section~\ref{sec:1p3resUnpert} we review the structure of bifurcations of the conservative 1:3 resonance and mention the associated degeneracies and pitchfork bifurcations of 3-periodic orbits for the unperturbed cubic maps~$H^{\pm}_3$ in the form~(\ref{eq:HSqPerturb}). Also in this section we show that all~$p:q$ resonances with odd~$q$ are triple degenerate in maps~$H_3^{\pm}$ with~$M_1=0$.  In Sections~\ref{sec:H_3plus} and~\ref{sec:H_3minus} we analyze reversible and non-conservative perturbations of~$H^+_3$ and~$H^-_3$, respectively, as well as  display bifurcation diagrams for near-degenerate 1:3 resonance and demonstrate the appearance of dissipative 3-periodic orbits under pitchfork bifurcations in both cases. In Section~\ref{sec:md} we provide numerical evidence of mixed dynamics in a reversible non-conservative perturbation of~$H^-_3$ and discuss possible emergence of mixed dynamics in the map~$H^+_3$.

\section{Reversible systems, mixed dynamics and symmetry-breaking bifurcations}\label{sec:rsMDsbb}

In~\cite{GonTur2017}, it was established that multidimensional systems with the compact phase space can have three different and independent forms of dynamics. Two of them have been well known for a long time: these are {\em conservative} and {\em dissipative} dynamics, the third form, the so-called {\em mixed dynamics}, is rather new.

The most famous example of conservative dynamics is provided by systems which preserve phase volume (e. g. Hamiltonian systems, area- and volume-preserving maps). From the point of view of topological dynamics, the conservative dynamics is characterized by the fact that the phase space is chain-transitive~\cite{AnBr85,GonTur2017}, i.e. any two points can be connected by~$\varepsilon$-orbits for any~$\varepsilon > 0$. The dissipative dynamics has a completely different nature: the phase space is not chain-transitive and, besides, one can construct a set  of nonintersecting absorbing and repelling domains containing, respectively, all attractors and repellers of the system.

As for the mixed dynamics,  first of all, it is characterized by the principal inseparability of attractors and repellers~\cite{GonShilTur97}, which implies the existence of infinitely many dissipative attractors and repellers and the impossibility of constructing a set of disjoint absorbing and repelling domains, see more details in~\cite{GonTur2017,GonchenkoGK20}.

As well-known~\cite{Conley78}, any homeomorphism of a compact phase space has attractors and repellers. An attractor is considered in the Ruelle sense~\cite{Ruelle81} as a closed invariant and chain-transitive  set which is stable under permanently acting perturbations\footnote{This type of stability is also called total stability or Lyapunov stability for~$\varepsilon$-orbits and has been known for a very long time, see e.g.~\cite{Mal44}. In fact, this means the stability under arbitrary small bounded noise.}, and a repeller is an attractor of the inverse map. In~\cite{GonTur2017} it was shown that such attractors and repellers can have a non-empty intersection, in contrast to attractors and repellers that are attracting and repelling sets, i.e. asymptotically stable invariant sets under forward and backward iterations, respectively. Denote the full sets of such attractors and repellers of the phase space as~${\cal A}$ and~${\cal R}$, respectively. Such closed invariant sets are called \emph{the full attractor} and \emph{full repeller} of a map in~\cite{GonTur2017}. Then the above classification of dynamics can be built according to the following 
principle~\cite{GonTur2017}: if~${\cal A}={\cal R}$, then the system demonstrates (topologically) conservative dynamics (in this case,~${\cal A}$ and~${\cal R}$ coincide with the whole phase space); if~${\cal A} \cap {\cal R} = \emptyset$,  then the dynamics is dissipative; and ~${\cal A} \cap {\cal R} \neq \emptyset$ and~${\cal A} \neq {\cal R}$ in the case of mixed dynamics. These logical relations constitute the complete system, and, therefore,  there is no other type of dynamics demonstrated by homeomorphisms of compact phase spaces.

However, for concrete systems with chaotic dynamics, it is practically never known precisely what~${\cal A}$ and~${\cal R}$ are. Therefore, certain, efficiently verifiable criteria are needed to determine the type of dynamics. If such criteria are well known for conservative and dissipative dynamics, then, in the case of mixed dynamics, the situation is more complicated: all known criteria are quite nontrivial and are related to the existence of the so-called absolute Newhouse regions~\cite{GonShilTur97, Turaev10, T15}.

In the case of two-dimensional diffeomorphisms, there are such regions where diffeomorphisms with the following properties are generic\footnote{The Newhouse regions are open in~$C^r$-topology with~$r\geq 2$, and a certain property is called generic if it holds for a residual subset (a set of the second Baire category) of diffeomorphisms of such region.}: (i) every such diffeomorphism has infinitely many periodic sinks, sources, and saddles and (ii) the closures of the sets of orbits of different types  have non-empty intersections. In~\cite{GonShilTur97}, it was proven that  the absolute Newhouse regions  exist in any neighborhood of a diffeomorphism with a non-transversal heteroclinic cycle that contains two saddle fixed points~$O_1$ and~$O_2$ and two heteroclinic orbits~$\Gamma_{12}$ and~$\Gamma_{21}$ such that~$W^s(O_1)$ and~$W^u(O_2)$ intersect at the points of~$\Gamma_{12}$ transversally, and~$W^s(O_2)$ and~$W^u(O_1)$ have a quadratic tangency at the points of~$\Gamma_{21}$, see Figure~\ref{Fig:HeteroclonicCycles}a. The principally important condition is that { the Jacobian of the map is greater than 1 at one of the fixed points and less than 1 at the other}. This result shows that the mixed dynamics is a generic property of two-dimensional diffeomorphisms from absolute Newhouse regions: indeed, the sets~${\cal A}$ and~${\cal R}$ are closed invariant sets that should contain all sinks and sources, respectively, and, hence, the set~${\cal A}\cap{\cal R}$ is non-empty due to the above property (ii).

\begin{figure}[t!]
\begin{minipage}[h]{1\linewidth}
\center{\includegraphics[width=0.8\columnwidth]{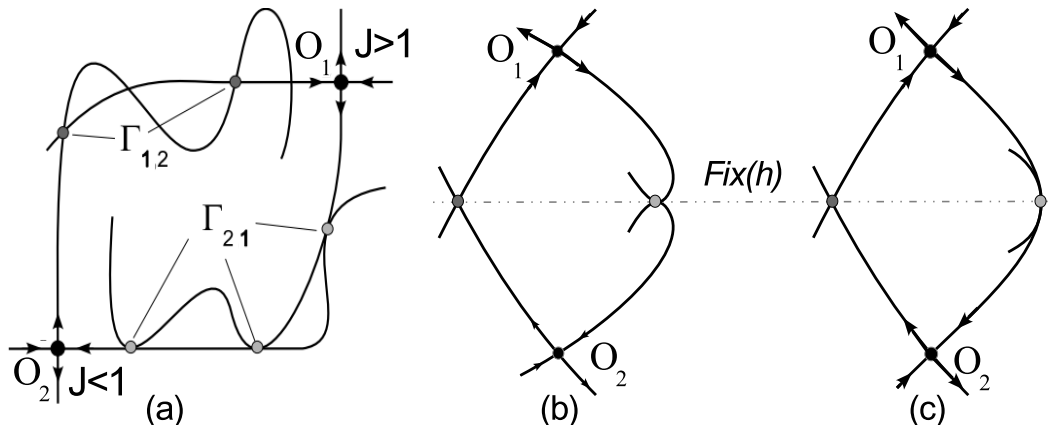}}
\end{minipage}
\caption{{\footnotesize Examples of non-transversal heteroclinic cycles of planar diffeomorphisms: (a) the heteroclinic cycle studied in~\cite{GonShilTur97}; (b) the non-transversal heteroclinic cycle with a quadratic tangency from~\cite{LambStenkin2004}; (c) the non-transversal heteroclinic cycle with a cubic tangency from~\cite{GonchenkoGSin20}.}}
		\label{Fig:HeteroclonicCycles}
\end{figure}

In the present paper we consider two-dimensional reversible diffeomorphisms. Recall that a diffeomorphism~$f$ is reversible if it is conjugate to its inverse map~$f^{-1}$ by means of an involution~$h$, i.e. the following relation is true:~$f^{-1} =  h\circ f\circ  h$, where~$h^2 = Id$.
The property of reversibility of~$f$ implies the strong symmetry of the set of orbits. An orbit that intersects the set~$Fix(R) =\{x: \; R(x) = x\}$ or the set~$Fix(R f)$ is called {\em symmetric}. Any symmetric periodic orbit of a two-dimensional reversible orientable map has eigenvalues~$\lambda$ and~$\lambda^{-1}$.  Moreover, such an orbit with eigenvalues~$e^{\pm i\varphi}$, where $\varphi\neq 0,\pi$, is, essentially, elliptic, since the principal hypotheses of the KAM theory hold~\cite{Sevryuk86}. However, there are principal differences in dynamics. In particular, it was shown in~\cite{GLRT14,GonTur2017} that a generic elliptic point of two-dimensional reversible maps is not completely conservative since it is the limit of periodic sinks and sources.

As for nonsymmetric orbits, they can be, in principle, of arbitrary type. However, for any nonsymmetric orbit, there exists a symmetric to it orbit with ``opposite'' dynamical properties. It means that if a periodic orbit has eigenvalues~$\lambda_1$ and~$\lambda_2$, then the symmetric to it orbit has eigenvalues~$\lambda_1^{-1}$ and~$\lambda_2^{-1}$. As we said before, nonsymmetric orbits compose a {symmetric pair of orbits}.

As in the dissipative case, in the space of reversible systems, Newhouse regions (i.e. such open regions in which reversible systems with both symmetric and nonsymmetric homoclinic tangencies are dense) exist near any system with a symmetric homoclinic tangency. However, there is one nontrivial moment related to the proof of the fact that these regions are absolute Newhouse regions, i.e. that they contain a residual subset of systems having infinitely many coexisting periodic attractors, repellers, saddles and elliptic orbits and the closure of the sets of the orbits of different types has a non-empty intersection.

This problem, proposed in~\cite{GonDelsh2013} as the Reversible Mixed Dynamics conjecture (RMD-conjecture), remains open for the multidimensional case. For two-dimensional reversible maps, it was proved in~\cite{GLRT14} for~$C^r$-perturbations (with~$2\leq r \leq \infty$) that keep the reversibility. Also the RMD-conjecture was proved in~\cite{LambStenkin2004,GonDelsh2013,DGGL18,GonchenkoGSin20} for some cases of one-parameter families unfolding generally symmetric couples of heteroclinic and homoclinic tangencies. In principle, only two most important cases remain unproven: these are one-parameter families which unfold generally symmetric quadratic and cubic homoclinic tangencies. For both cases, the key problem is to find and study {\em symmetry-breaking bifurcations} in the first return maps.

One can standardly prove that these first return maps can be presented (in some rescaled coordinates and parameters)~\cite{GG09,DGG14,DGG15,GonGonOvs2017} as reversible maps close to the conservative quadratic and cubic H\'enon maps of the form~$\bar x = y,\; \bar y = M - x - y^2$ and~$\bar x = y,\; \bar y = - x + My \pm y^3$, respectively. These maps are reversible with respect to the involution~$h: x\to y,\; y\to x$, and their dynamics has been studied for a long time. However, until recently, their symmetry-breaking bifurcations have been completely unknown. As for the conservative quadratic H\'enon map, only in the very recent paper~\cite{GonchenkoGSaf20}, it has been shown that symmetry-breaking bifurcations occur starting with only 6-periodic orbits.

It is not the case of the cubic H\'enon maps \eqref{eq:HenonMapCubic}, where, as was shown in~\cite{DulinMeiss2000}, symmetry-breaking bifurcations can take place for 3-periodic orbits which emerge near the degenerate 1:3 resonance.
Moreover, in~\cite{DulinMeiss2000}, the corresponding bifurcation diagrams were constructed. In Sections 3 and 4, we complement these diagrams 
providing phase portraits  corresponding to transitions through bifurcation curves near the degenerate 1:3 resonance. Nevertheless, the main goal of the current paper is the study of peculiarities of \textit{reversible symmetry-breaking bifurcations} associated with the 1:3 resonance in the cubic H\'enon maps~\eqref{eq:HenonMapCubic} under reversible non-conservative perturbations, see Sections~\ref{sec:H_3plus}, \ref{sec:H_3minus} and~\ref{sec:md}.

Recall that for reversible systems, typical (codimension 1) local symmetry-breaking bifurcations are supercritical and subcritical pitchfork bifurcations~\cite{LermanTuraev2012}. As a result of a supercritical bifurcation, a symmetric elliptic periodic orbit bifurcates into a saddle and in its neighborhood a symmetric pair of stable and unstable periodic orbits emerges, see Figure~\ref{Fig:ReversiblePF}a-b. Under a subcritical bifurcation, the saddle periodic orbit becomes elliptic and there appears a symmetric pair of saddle periodic orbits, one with the Jacobian~$J<1$ and the other with the Jacobian~$J>1$, see Figure~\ref{Fig:ReversiblePF}c-d.
It is important to note that local symmetry-breaking bifurcations can be considered as an indicator of mixed dynamics in systems where the difference between intersecting attractor and repeller is invisible in standard numerics, for instance as in the nonholonomic model of rubber disk on the plane~\cite{GonchenkoGK20}.

\begin{figure}[t!]
\begin{minipage}[h]{1\linewidth}
\center{\includegraphics[width=0.8\columnwidth]{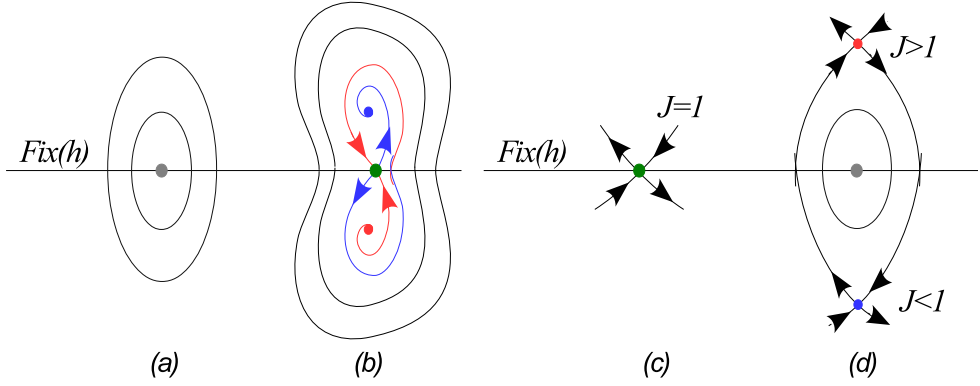}}
\end{minipage}
\caption{{\footnotesize Two types of reversible pitchfork bifurcation: (a)$\rightarrow$(b) supercritical pitchfork bifurcation at which an elliptic orbit (in plot~(a))  becomes a saddle orbit and a pair of stable and unstable orbits appears near the saddle (in plot~(b)); (c)$\rightarrow$(d) subcritical pitchfork bifurcation at which a saddle orbit (in plot~(c)) bifurcates into an elliptic orbit surrounded by a couple of saddle orbits with the Jacobians~$J>1$ and~$J<1$ (in plot~(d)).}}
		\label{Fig:ReversiblePF}
\end{figure}

Concerning global symmetry-breaking bifurcations, they are related to the appearance of non-transversal intersections between invariant manifolds of either the same periodic saddle orbit (homoclinic tangencies) or different saddles (heteroclinic tangencies). Under certain conditions bifurcations of these tangencies lead to the emergence of symmetric pairs of sinks and sources, area-expanding and area-contracting saddles as well as symmetric elliptic and saddle periodic orbits, and, hence, to the reversible mixed dynamics. Some of such global symmetry-breaking bifurcations were studied for non-transversal heteroclinic cycles of different types~\cite{GonShilTur97,LambStenkin2004,GonDelsh2013,GonchenkoGSin20}, see some examples of such cycles in Figure~\ref{Fig:HeteroclonicCycles}.	
	
\section{Construction of reversible non-conservative perturbations}\label{sec:constrRNP}
	
Following ideas of~\cite{GonchenkoGSaf20} we construct reversible non-conservative perturbations in the following conservative polynomial diffeomorphisms of the plane
$$
H_n: \;
\bar x = y,\;\;\;
\bar y = - x + P_n(y),
\label{eq:HenonMap}
$$
where~$P_n(y)$ is a polynomial of degree~$n$. These maps are also called \emph{H\'enon-like maps}. In particular, they include the cubic conservative H\'enon maps for~$P_3(y)=M_1+ M_2 y \pm y^3$.

The first approach to get reversible non-conservative perturbations is based on the fact that the second iteration of the inverse conservative cubic H\'enon maps~$H^{\pm}_3$ can be implicitly presented in the reversible cross-form
\begin{equation}
	f: (x,y)\to (\bar x, \bar y):  \;\;\; \bar x = F(x, \bar y),\;\;\;
	y = F(\bar y, x),
	\label{eq:RevMap}
\end{equation}
for which it is easy to add such perturbations. Indeed, the map~(\ref{eq:RevMap}) is reversible with respect to the involution 
$h: (x, y) \to (y, x)$, since 
$
h\circ f\circ h : \; \bar y = F(y,\bar x), \; x= F(\bar x, y),
$
and the inverse of this map coincides with~(\ref{eq:RevMap}). 
Thus, the map
$\bar y = F(x, \bar y) + \epsilon(x,\bar y),  y = F(\bar y,x) + \epsilon(\bar y,x)$  with an arbitrary smooth function $\epsilon$ is also reversible.
	
If~$F(x,y)$ in the right hand side of~(\ref{eq:RevMap}) is linear in~$x$, i.e.~$F(x,y) = x G_1(y) + G_0(y)$,
the map~(\ref{eq:RevMap}) can be rewritten in the explicit form
\begin{equation}
	f: \;\;\; \bar x = x G_1(\bar y) + G_0(\bar y), \;\;\;
	\bar y = \frac{y-G_0(x)}{G_1(x)}.
	\label{eq:RevMapExplicit}
\end{equation}
If~$G_0$ and~$G_1$ are differentiable and~$G_1(y)\neq 0, y\in \mathbb{R}$, the map~(\ref{eq:RevMapExplicit}) is a diffeomorphism.	
Indeed, the Jacobian of~(\ref{eq:RevMapExplicit}) is
$
J(f)= {G_1(\bar y)}/{G_1(x)},
$
which is different from 0, since $G_1\neq 0$.

If we take~$F(x,y) = -x + P_n(y) + \varepsilon \varphi (x, y)$, where 
$\varepsilon$ is a small parameter and~$\varphi(x,y)$ is a function (preferably linear in~$x$) such that~(\ref{eq:RevMap}) is non-conservative, we get 
\begin{equation}
	\tilde{H}^2_n(\varepsilon)\;: \;\;\; \bar x =  - x + P_n(\bar y) + \varepsilon \varphi (x, \bar y), \;\;\;
	y =  - \bar y + P_n(x) + \varepsilon \varphi (\bar y, x),
	\label{eq:tldH2n}
\end{equation}
which is a reversible perturbation of the second iteration of the inverse H\'enon-like maps~$H^{-2}_n$. This follows from the following lemma.

\begin{lemma}\label{lm:H2n}
For~$\varepsilon=0$, the diffeomorphism~$\tilde{H}^2_n (\varepsilon)$, defined in~(\ref{eq:tldH2n}), satisfies the following property:
$$
		\tilde{H}^2_n(0)=(H_n)^{-1} \circ (H_n)^{-1}.
$$
\end{lemma}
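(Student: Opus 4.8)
The plan is to compute both sides of the claimed identity explicitly as maps of the plane and check that they agree. First I would write down the inverse of the H\'enon-like map: since $H_n$ sends $(x,y)$ to $(y,\,-x+P_n(y))$, solving $\bar x=y$, $\bar y=-x+P_n(y)$ for $(x,y)$ gives the closed form
\[
(H_n)^{-1}:\ (x,y)\mapsto\bigl(P_n(x)-y,\ x\bigr),
\]
which is again a polynomial diffeomorphism of $\mathbb{R}^2$.

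Next I would compose this map with itself. Applying $(H_n)^{-1}$ once sends $(x,y)$ to $(x_1,y_1)=(P_n(x)-y,\ x)$; applying it a second time sends $(x_1,y_1)$ to $(P_n(x_1)-y_1,\ x_1)$, and substituting the values of $x_1,y_1$ yields
\[
(H_n)^{-1}\circ(H_n)^{-1}:\ (x,y)\mapsto\bigl(P_n(P_n(x)-y)-x,\ P_n(x)-y\bigr).
\]
Then I would unravel the implicit relations defining $\tilde H^2_n(0)$: setting $\varepsilon=0$ in~(\ref{eq:tldH2n}) gives the system $\bar x=-x+P_n(\bar y)$ and $y=-\bar y+P_n(x)$. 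The second equation determines $\bar y$ uniquely, $\bar y=P_n(x)-y$, with no branching; substituting this into the first equation gives $\bar x=-x+P_n\bigl(P_n(x)-y\bigr)$. Hence $\tilde H^2_n(0)$ sends $(x,y)$ to $\bigl(-x+P_n(P_n(x)-y),\ P_n(x)-y\bigr)$, which is precisely the expression found above for $(H_n)^{-1}\circ(H_n)^{-1}$, proving the lemma.

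There is no genuine obstacle here; the only point requiring a little care is to note that the system~(\ref{eq:tldH2n}) at $\varepsilon=0$ really does define a single-valued diffeomorphism — the $y$-equation solves for $\bar y$ explicitly and the $x$-equation then determines $\bar x$ — so the comparison of the two resulting formulas is legitimate. (One could also double-check the result by verifying that the composed map has inverse $H_n\circ H_n$.) I note that for the perturbed case $\varepsilon\neq0$ one would instead invoke the implicit function theorem to solve for $\bar y$, but that is not needed for the present statement.
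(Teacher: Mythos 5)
Your proof is correct and follows essentially the same route as the paper: compute $(H_n)^{-1}$ explicitly, iterate it once, and compare with the system~(\ref{eq:tldH2n}) at $\varepsilon=0$ solved for $\bar y$ and then $\bar x$. The extra remark about single-valuedness of the implicit form is a harmless refinement the paper leaves tacit.
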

	
\begin{proof}
We write the inverse map~$(H_n)^{-1}$ (solving the equations in \eqref{eq:HenonMap} for~$x$ and~$y$ and swapping~$\bar x \leftrightarrow x, \bar y \leftrightarrow y$) as
\[
(H_n)^{-1}\; : \;\;\; \bar x=-y +P_n(x), \;\;\; \bar y = x.
\]
The second iteration of this map
\[
(H_n)^{-2}=(H_n)^{-1} \circ (H_n)^{-1}\; : \;\;\; \bar x=-x +P_n(-y +P_n(x)), \;\;\; \bar y = -y +P_n(x)
\]
coincides with~$\tilde{H}^2_n(0)$.
\end{proof}
	
Recall that 
the bifurcation diagrams of~$H_n$ itself and the inverse map~$H_n^{-1}$ are related in the sense that an attractor of one map is a repeller of the other map and vice versa. Moreover,~$\tilde{H}^2_n(0)$ appears while studying bifurcations of heteroclinic cycles connecting a saddle with the Jacobian~$J>1$ and a saddle with~$J<1$, see Figure~\ref{Fig:HeteroclonicCycles}.
	
In the case of the cubic polynomials~$P_3(y)=M_1 + M_2 y \pm y^3$, the map~(\ref{eq:tldH2n}) is the perturbed second iteration of the inverse cubic conservative H\'enon map~$H^\pm_3$
\begin{equation}
	\tilde{H}_3^{\pm} (\varepsilon) \;:
	\bar x = M_1 + M_2 \bar y \pm \bar{y}^3 - x + \varepsilon \varphi (x, \bar y), \;\;\;
	y = M_1 + M_2 x \pm x^3 - \bar y + \varepsilon \varphi (\bar y, x).
	\label{eq:HSqPerturb}
\end{equation}
In~\cite{SamylinaShyhmKazakov2017} a numerical study of bifurcations of 1:3 and 1:4 resonances was done for the perturbed system~$\tilde{H}^{2+}_3$ choosing~$\varphi(x,y)=x y$. For the convenience of the reader, we include these results in Section~\ref{sec:H_3plus}. We also provide the same analysis for the perturbation~$\tilde{H}^{2-}_3$ with~$\varphi(x,y)=x y$ in Section~\ref{sec:H_3minus}.
	
In the case of the map~(\ref{eq:tldH2n}), assuming the perturbation~$\varphi(x, y)=x \varphi_1 (y) + \varphi_0(y)$ is linear in the first variable~$x$, we can study the explicit map
$$
	\tilde{H}^2_n(\varepsilon)\;: \;\;\; \bar x =  - x + P_n(\bar y) + \varepsilon \varphi (x, \bar y), \;\;\;
	\bar y =  \frac{- y + P_n(x) +\varepsilon \varphi_0(x)}{1-\varepsilon \varphi_1 (x)}.
$$
Let us consider the perturbation~$\varphi(x,y)=x y$. In particular, the cubic map~$\tilde{H}^{2\pm}_3 (\varepsilon)$, given in~(\ref{eq:HSqPerturb}), turns out
\begin{equation}
	\tilde{H}^{2\pm}_3 (\varepsilon)\;:\;\;\;
	\bar x = M_1 + M_2 \bar y \pm \bar{y}^3 - x + \varepsilon x \bar y, \;\;\;
	\bar y = \frac{M_1 + M_2 x \pm x^3 - y}{1-\varepsilon x}.
	\label{eq:H2pl3expl}
\end{equation}
	
For~$\varepsilon=0$, the map~$\tilde{H}^{2\pm}_3 (0)$ is the composition~$({H}^{\pm}_3)^{-1}\circ ({H}^{\pm}_3)^{-1}$ due to Lemma~\ref{lm:H2n}, hence, it is reversible and conservative. Also the perturbed map~(\ref{eq:H2pl3expl}) is reversible with respect to the involution~$h: (x,y)\to (y,x)$ due to the cross-form presentation. However, for~$\varepsilon\neq 0$, the map ~$\tilde{H}^{2\pm}_3 (\varepsilon)$ is not conservative anymore since the Jacobian of the map is not equal to 1:
$$
	J = \frac{1-\varepsilon \bar y}{1 - \varepsilon x} \not\equiv 1.
$$
Therefore, one can consider the map~(\ref{eq:H2pl3expl}) as a non-conservative perturbation of~$({H}^{\pm}_3)^{-1}\circ ({H}^{\pm}_3)^{-1}$ that preserves the reversibility.
		
The second method consists in writing the perturbations in the form
\begin{equation}
	\tilde{H}_{n} (\varepsilon) \;: \;\;\; \bar x + \varepsilon \varphi(\bar y, \bar x) = y + \varepsilon \varphi(x,y), \;\;\; \bar y = -x + P_n(y+\varepsilon \varphi(x,y)),
	\label{eq:rev2}
\end{equation}
where~$\varepsilon$ is a small parameter and~$\varphi$ is a smooth function which gives a non-conservative perturbation. This perturbation can be obtained applying the so-called Quispel-Roberts method~\cite{QuispelRoberts92}, see also~\cite{GonchenkoGSaf20, BessaCR2015}. This method uses two facts: (i) any two-dimensional reversible map~$f$ can be presented as the composition of two involutions,~$f=\zeta_1\circ \zeta_2$, and a perturbed map is obtained perturbing one of the involutions,~$\tilde f=\zeta_1\circ \tilde\zeta_2$; (ii) if~$\zeta$ is an involution of~$f$ and the map~$T$ is a diffeomorphism, then~$\tilde\zeta = T^{-1}\circ \zeta \circ T$ is also involution of~$f$. Indeed,~$H_n=h\circ h_1$, where~$h: (x,y)\to (y,x)$,~$h_2: (x,y) \to (-x+P_n(y), y)$ are involutions. Thus,~$\tilde{H}_n = h\circ \tilde{h}_2$ is obtained perturbing the second involution~$\tilde{h}_2= \tilde h_2 = T^{-1}\circ h_2 \circ T$ with a near identity map~$T: \; \bar x = x, \bar y = y + \varepsilon \varphi(x,y)$.

\begin{lemma}
The diffeomorphism~$\tilde{H}_n(\varepsilon)$, defined in~(\ref{eq:rev2}), is reversible  with respect to the involution~$h: (x,y)\to (y,x)$.
\end{lemma}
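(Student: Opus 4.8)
The plan is to realise $\tilde H_n(\varepsilon)$ as a composition of the involution $h$ with a second involution, after which reversibility follows from the two elementary facts recalled in the text: the composition of two involutions is a map reversible with respect to (either of) them, and conjugating an involution by a diffeomorphism again yields an involution. Concretely, I would write $H_n = h\circ h_2$, where $h:(x,y)\mapsto(y,x)$ and $h_2:(x,y)\mapsto(-x+P_n(y),y)$; a one-line check gives $h_2^2=\mathrm{Id}$ and $h\circ h_2=H_n$. Then set $\tilde h_2:=T^{-1}\circ h_2\circ T$ with the near-identity map $T:(x,y)\mapsto(x,\;y+\varepsilon\varphi(x,y))$, which is a diffeomorphism for $|\varepsilon|$ small (implicit function theorem, $T$ being $C^1$-close to the identity). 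Since $\tilde h_2^{\,2}=T^{-1}h_2^{\,2}T=\mathrm{Id}$, the map $\tilde h_2$ is again an involution, and hence $f:=h\circ\tilde h_2$ satisfies
\[
h\circ f\circ h \;=\; h\circ h\circ \tilde h_2\circ h \;=\; \tilde h_2\circ h \;=\;(h\circ\tilde h_2)^{-1}\;=\;f^{-1},
\]
i.e. $f$ is reversible with respect to $h$. It therefore remains only to identify $f=h\circ T^{-1}\circ h_2\circ T$ with the cross-form map $\tilde H_n(\varepsilon)$ of~(\ref{eq:rev2}).

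For this identification I would push a point through the composition. Writing $\eta:=y+\varepsilon\varphi(x,y)$, we have $T(x,y)=(x,\eta)$, then $h_2(x,\eta)=(-x+P_n(\eta),\eta)$; applying $T^{-1}$ leaves the first coordinate unchanged and replaces $\eta$ by the unique $w$ (again for $|\varepsilon|$ small) solving $w+\varepsilon\varphi(-x+P_n(\eta),w)=\eta$; finally $h$ swaps the two coordinates. Denoting the image $(\bar x,\bar y)$, this gives $\bar y=-x+P_n(\eta)=-x+P_n\bigl(y+\varepsilon\varphi(x,y)\bigr)$, which is exactly the second relation in~(\ref{eq:rev2}); and $\bar x=w$, while substituting $w=\bar x$ and $-x+P_n(\eta)=\bar y$ turns the defining equation for $w$ into $\bar x+\varepsilon\varphi(\bar y,\bar x)=y+\varepsilon\varphi(x,y)$, which is precisely the first relation in~(\ref{eq:rev2}). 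Hence $h\circ T^{-1}\circ h_2\circ T$ coincides with $\tilde H_n(\varepsilon)$, and the reversibility established above transfers to it.

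There is no deep obstacle here: the only points requiring a little care are the well-definedness statements, namely that $T$ is invertible and that the implicit equation for $w$ (equivalently, for $\bar x$ in~(\ref{eq:rev2})) has a unique solution — both hold for $\varepsilon$ sufficiently small, or more generally whenever $v\mapsto v+\varepsilon\varphi(u,v)$ is a bijection for every $u$. As an alternative that avoids factoring through $T$, one can check $h\circ\tilde H_n(\varepsilon)\circ h=\tilde H_n(\varepsilon)^{-1}$ directly from~(\ref{eq:rev2}): substituting the first relation into the second rewrites the map in the symmetric form $G(x,y)=G(\bar y,\bar x)$, $\;x=-\bar y+P_n\bigl(G(\bar y,\bar x)\bigr)$ with $G(u,v):=v+\varepsilon\varphi(u,v)$, from which both $h\circ\tilde H_n\circ h$ and $\tilde H_n^{-1}$ are read off and seen to agree, the injectivity of $v\mapsto G(u,v)$ being used to pin down the second component. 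I would present the involution-composition argument as the main proof, since it dovetails with the Quispel--Roberts construction already set up in the text, and relegate the direct verification to a remark.
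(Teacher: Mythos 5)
Your proposal is correct, and your main argument takes a genuinely different route from the paper's. The paper proves the lemma by direct verification on the implicit (cross-form) equations: it writes $\tilde{H}_n(\varepsilon)^{-1}$ by swapping barred and unbarred variables in~(\ref{eq:rev2}), computes $h\circ\tilde{H}_n(\varepsilon)\circ h$ by the two variable interchanges, and observes that the two systems of equations coincide. You instead promote to a proof the Quispel--Roberts factorization that the paper only sketches as motivation before the lemma: $H_n=h\circ h_2$ with $h_2:(x,y)\mapsto(-x+P_n(y),y)$, $\tilde h_2=T^{-1}\circ h_2\circ T$ again an involution, hence $h\circ\tilde h_2$ reversible, followed by the (correctly executed) bookkeeping showing $h\circ T^{-1}\circ h_2\circ T$ reproduces exactly the two relations of~(\ref{eq:rev2}). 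What your route buys is conceptual transparency --- reversibility falls out of the general facts about involutions, and the computation explains \emph{why} the perturbation was written in the form~(\ref{eq:rev2}) in the first place; what it costs is the extra care you rightly flag about invertibility of $T$ and of $v\mapsto v+\varepsilon\varphi(u,v)$, issues the paper's formal manipulation of the implicit equations sidesteps (consistently with its later Remark that these maps need not be global diffeomorphisms). Your closing alternative --- reading off $h\circ\tilde H_n\circ h$ and $\tilde H_n^{-1}$ from the symmetric cross-form $G(\bar y,\bar x)=G(x,y)$, $x=-\bar y+P_n(G(\bar y,\bar x))$ --- is essentially the paper's own proof, so relegating it to a remark as you suggest is a reasonable presentational choice.
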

	
\begin{proof} The proof is similar to the one done in~\cite{GonDelsh2013} for the map~(\ref{eq:RevMap}), see also~\cite{GonchenkoGSaf20}. 
Indeed, to prove the reversibility of~$\tilde{H}_n(\varepsilon)$, we have to show that~$\tilde{H}_n(\varepsilon)^{-1}=h\circ \tilde{H}_n(\varepsilon)\circ h$.

First, we write the inverse map~$\tilde{H}_n(\varepsilon)^{-1}$ swapping the bar and no-bar variables~$\bar x \leftrightarrow x, \bar y \leftrightarrow y$:
\begin{equation}
		\tilde{H}_{n} (\varepsilon)^{-1} \;: \;\;\; \bar x = - y + P_n(x + \varepsilon \varphi(y, x)), \;\;\; \bar y + \varepsilon \varphi(\bar x, \bar y) =x + \varepsilon \varphi(y, x).
\label{eq:rev2inv}
\end{equation}
Second, the composition~$\tilde{H}_n(\varepsilon)\circ h$ is obtained interchanging the variables~$x\to y$,~$y\to x$ in~$\tilde{H}_n(\varepsilon)$ according to the involution~$h$
$$
		\tilde{H}_n(\varepsilon)\circ h\;: \;\;\; \bar x + \varepsilon \varphi(\bar y, \bar x) = x + \varepsilon \varphi(y,x), \;\;\; \bar y = -y + P_n(x+\varepsilon \varphi(y,x)).
$$
Then we apply~$h$ onto~$\tilde{H}_n(\varepsilon)\circ h$ swapping the bar variables~$\bar x \to \bar y$,~$\bar y \to \bar x$ and get
$
h\circ \tilde{H}_n(\varepsilon)\circ h 
$
which coincides with~$\tilde{H}_n(\varepsilon)^{-1}$ in~(\ref{eq:rev2inv}). \end{proof}

For~$\varepsilon=0$, the map~(\ref{eq:rev2}) coincides with the conservative map~(\ref{eq:HenonMap}), not with the second iteration of the inverse map as in the first method.
We consider the concrete perturbation~$\varphi(x,y)= xy$
\begin{equation}
	\tilde{H}^{\pm}_{3}(\varepsilon) \;: \;\;\; \bar x + \varepsilon \bar x \bar y = y + \varepsilon xy, \;\;\; \bar y = -x + M_1 + M_2 (y+\varepsilon xy)  \pm (y+\varepsilon xy)^3.
	\label{eq:rev2cub}
\end{equation}
Since we choose a perturbation linear in the first variable~$x$, the equations in~(\ref{eq:rev2cub}) can be solved for~$\bar x$ and~$\bar y$. Thus, the maps~$\tilde{H}^{\pm}_{3}$ can be written in the explicit form. The Jacobian of the maps equals
$$
	J=\frac{1 + \varepsilon x}{1+\varepsilon \bar y}
$$
which is different from 1 for~$\varepsilon\neq 0$. We analyze  numerically the bifurcations of the 1:3 resonance in the perturbed cubic conservative H\'enon maps~$\tilde{H}^{d}_{3}(\varepsilon)$ for~$\varepsilon\neq 0$ and construct the corresponding bifurcation diagrams in Sections~\ref{sec:H_3plus} and~\ref{sec:H_3minus} paying special attention to the appearance of nonsymmetric 
periodic orbits under reversible pitchfork bifurcations.
	
\begin{remark}
The maps~(\ref{eq:H2pl3expl}) and~(\ref{eq:rev2cub}) are not diffeomorphisms in the whole plane~$\mathbf{R^2}$. However, the dynamics of this map is concentrated close to the origin, and for small~$\varepsilon$ the map can be considered as a diffeomorphism in a quite large neighborhood of the origin. Besides, for the first method, to have truly diffeomorphisms one can choose the perturbation~$\varphi (x,y)= x \arctan(y)$ instead of~$\varphi (x,y) = x y$.
\end{remark}

\section{1:3 resonance for the unperturbed maps~${H}^{\pm}_3$}\label{sec:1p3resUnpert}
	
In the general non-conservative setting, the analysis of the 1:3 resonance was done by Arnold~\cite{Arn-Geom}, see also~\cite{ArnKozNeisht07,Kuz}. For the study of the 1:3 resonance in conservative maps we refer to~\cite{BroerHeinsmann,SV09}. Recall that one can study the structure of such bifurcations writing the local normal form expressed in complex coordinates~$z=x+i y$ and~$z^*=x-i y$:
\begin{equation}
	\bar z = e^{ i 2\pi/3} (z + a_{02} (z^*)^2 + a_{21} z^2 z^*) + O(|z|^4),
	\label{eq:1p3nf}
\end{equation}
where the coefficient~$a_{02}$ is purely imaginary since map \eqref{eq:1p3nf} is reversible with respect to the involution~$h: z \to z^*$. In this case the 1:3 resonance is degenerate when~$a_{02}=0$.
	
Let~$d=\pm 1$ be the coefficient before the cubic term in~(\ref{eq:HenonMapCubic}), thus,~$d=1$ corresponds to~$H^+_3$ and~$d=-1$ stands for~$H^-_3$.
Then, for the parameters $M_1$ and $M_2$ in the 1:3 resonance curve
\begin{equation}
	l^d_{1:3} : M_1^2= \frac{d}{27} (1+M_2) (2M_2-7)^2,
	\label{eq:curveL13d}
\end{equation}
the map~${H}^{d}_3$ has the fixed point with eigenvalues~$e^{\pm i 2\pi/3}$ which is~$P_{1:3}^{(1)}=\sqrt{-d (1+M_2)/3} \left(1, 1\right)$  in the branch~$M_1>0$ and~$P_{1:3}^{(2)}=-\sqrt{-d (1+M_2)/3}\left(1, 1\right)$  in the branch~$M_1<0$. Note that the curve~$l^-_{1:3}$ has a self-intersection point at~$(M_1,M_2)=(0,7/2)$ where the map~$H^-_3$ has two fixed points~$P^{(1)}_{1:3}=\left(\sqrt{3/2}, \sqrt{3/2}\right)$ and~$P^{(2)}_{1:3}=\left(-\sqrt{3/2}, -\sqrt{3/2}\right)$ with eigenvalues~$e^{\pm i 2\pi/3}$ simultaneously.
	
The coefficients of the normal form~(\ref{eq:1p3nf}) are as follows:
$$
	\begin{array}{lll}
	a_{02}&=\displaystyle -2 i \sqrt{-d(1+M_2)}, &\text{ for } M_1>0,\\
	a_{02}&=\displaystyle 2 i \sqrt{-d(1+M_2)}, &  \text{ for } M_1<0,\\
	\displaystyle a_{21} &= 
	-4d(1+M_2) + 4\sqrt{3} d M_2 i.
	&
	\end{array}
$$
It is easy to see that~$a_{02}$ and~$a_{21}$ do not vanish simultaneously. Moreover,~$a_{02}=0$ at~$M_2=-1$, therefore the 1:3 resonance is degenerate when~$M_1=0,M_2=-1$.
	
\begin{figure}[t]
\hspace{0.2cm}
\begin{minipage}{1.0\linewidth}		 \center{\includegraphics[width=1.0\columnwidth]{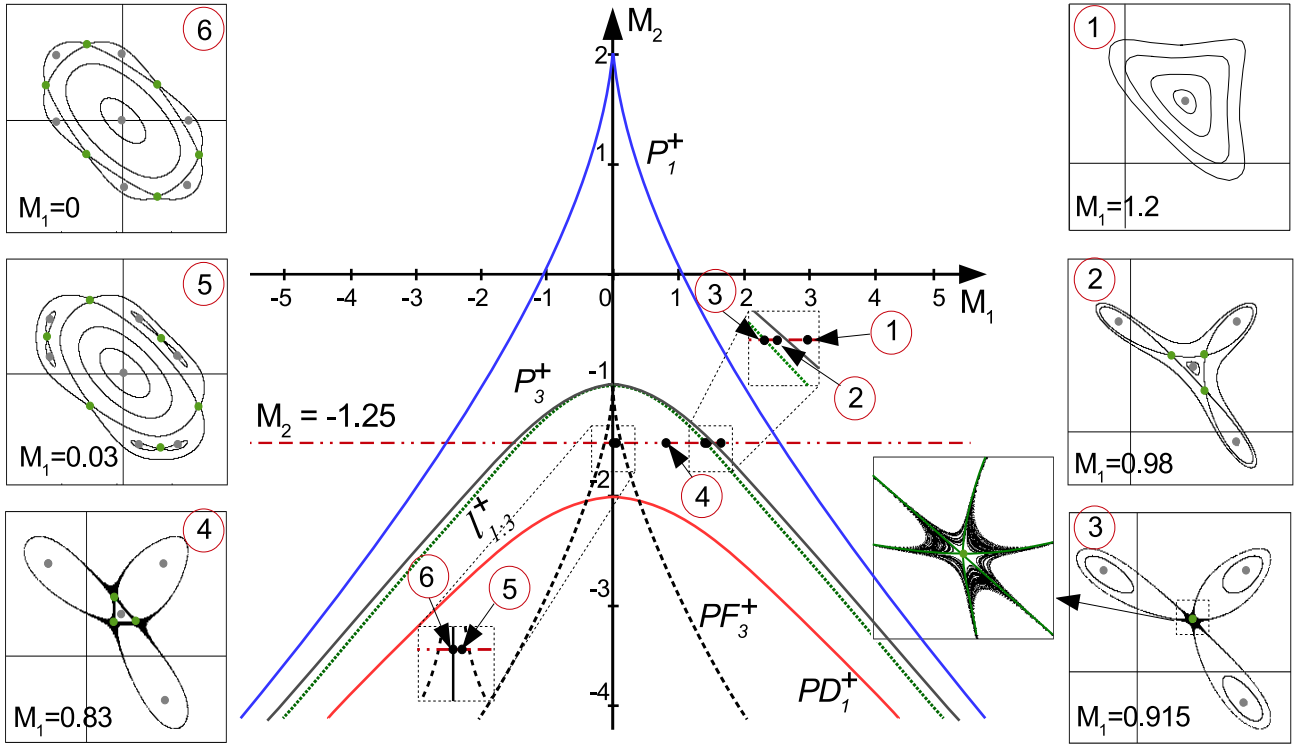}}
\end{minipage}
\caption{{\footnotesize Bifurcation diagram for the conservative cubic H\'enon map~$H_3^+$. The bifurcations curves~$P_1^+$, ~$l_{1:3}^+$, and~$PD_1^+$ are the curves of a parabolic (pitchfork for~$M_1=0$) bifurcation, 1:3 resonance, and a period-doubling bifurcation of fixed points, respectively. The curves~$P_3^+$ and~$PF_3^+$ are the curves of parabolic and pitchfork bifurcations of 3-periodic orbits, respectively. For the fixed value of~$M_2=-1.25$, the phase portraits are presented for~$M_1=~0, 0.03, 0.83, 0.915, 0.98$ and~$1.2$. For~$M_1<0$ the phase portraits are reflected symmetrically with respect to~$y=-x$.}}
	\label{Fig:HenonCubicBD_H3+}
\end{figure}
	
A detailed bifurcation analysis for the conservative cubic H\'enon maps~${H}^{\pm}_3$ was carried out in~\cite{DulinMeiss2000, GonGonOvs2017,GGOV18}. In particular, bifurcations of 3-periodic orbits were studied in~\cite{DulinMeiss2000}, and one of the principal bifurcations were pitchfork bifurcations.
For convenience, we display the corresponding bifurcation diagrams for~${H}^{+}_3$ and~${H}^{-}_3$ and complement them with the related phase portraits in Figures~\ref{Fig:HenonCubicBD_H3+} and~\ref{Fig:HenonCubicBD_H3-}, respectively.
	
\begin{figure}[t]
\hspace{0.2cm}
\begin{minipage}{1.0\linewidth}
 \center{\includegraphics[width=1\columnwidth]{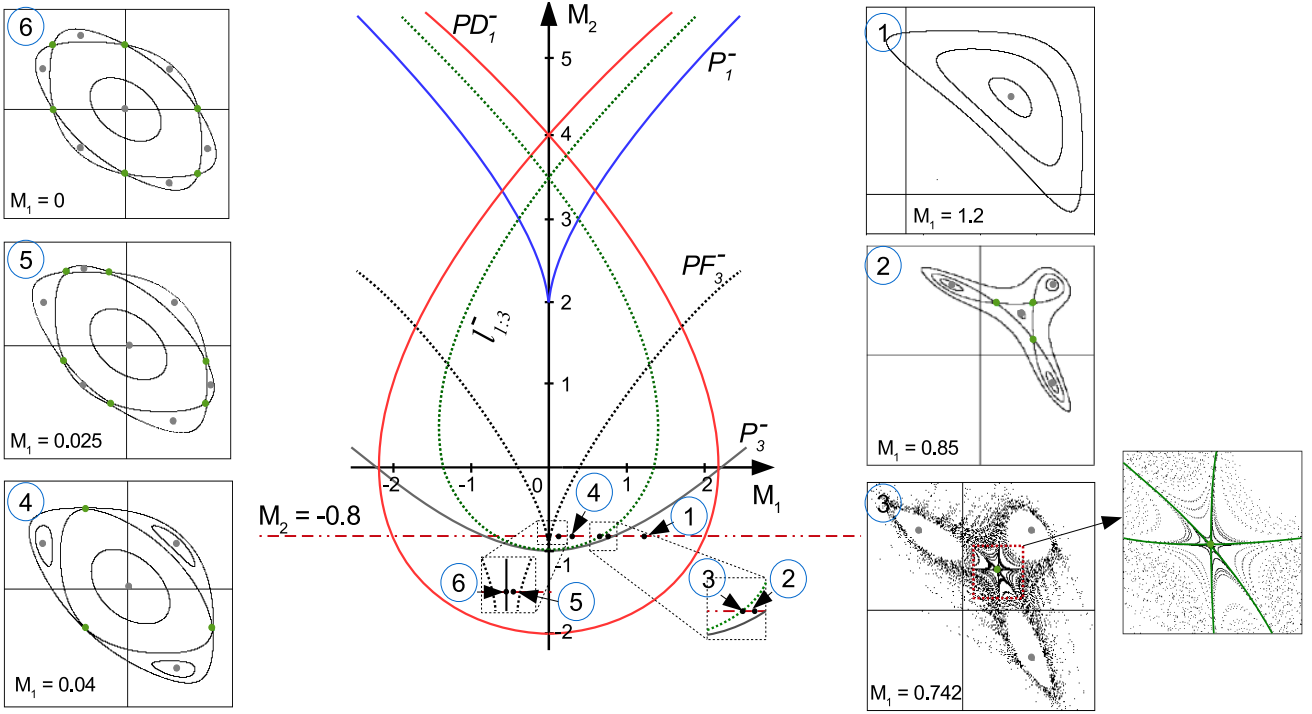}}
\end{minipage}
\caption{{\footnotesize Bifurcation diagram for the conservative cubic H\'enon map~$H_3^-$. The bifurcations curves~$P_1^-$, ~$l_{1:3}^-$, and~$PD_1^-$ are the curves of parabolic (pitchfork for~$M_1=0$) bifurcation, 1:3 resonance, and period-doubling bifurcation of fixed points, respectively. The curves~$P_3^-$ and~$PF_3^-$ are the curves of parabolic and pitchfork bifurcations of 3-periodic orbits, respectively. The sequence of phase portraits in the horizontal line~$M_2=-0.8$ is displayed for~$M_1=0, 0.025, 0.04, 0.742, 0.85$, and~$1.2$. For~$M<0$ the phase portraits are reflected symmetrically with respect to~$y=-x$.}}
		\label{Fig:HenonCubicBD_H3-}
\end{figure}
	
Let us briefly describe  these figures. Besides the 1:3 resonance curve~$l_{1:3}^d$, defined in~(\ref{eq:curveL13d}), the further bifurcation curves are~$P_1^d$,~$PD_1^d$,~$P_3^d$ and ~$PF_3^d$. The curves~$P_1^d$ and~$PD_1^d$, which have the following equations
$$
\begin{array}{ll}
	P_1^d :& \displaystyle M_1^2  = \frac{4d}{27} \left(2-M_2\right)^3\\
	PD_1^d : & \displaystyle M_1^2 = - \frac{4d}{27} \left(2+M_2\right)\left(4-M_2\right)^2, \\
\end{array}
$$
are the curves of fixed points with double eigenvalue (1,1) and (-1,-1), respectively. The curve~$P_1^d$ corresponds to a parabolic bifurcation of a fixed point of the map for~$M_1\neq 0, M_2\neq 2$. As a result of this bifurcation, crossing~$P_1^d$ laterally, there appear elliptic and saddle (hyperbolic) fixed points for parameters below~$P^+_1$ in the case of the map~$H_3^+$ and above~$P_1^-$ in the case of the map~$H_3^-$. When passing through the point~$M_1= 0, M_2= 2$ at~$P_1^d$ vertically (being~$M_1=0$ fixed) a subcritical and supercritical conservative pitchfork bifurcation takes place for~$H_3^+$ and~$H_3^-$, respectively. Namely, in the case of~$H_3^+$, under this bifurcation the saddle fixed point for~$M_2>2$ (above~$P^+_1$) becomes elliptic and a pair of saddle fixed points appears around for~$M_2<2$. For the map~$H_3^-$, passing through the point~$M_1= 0, M_2= 2$ from bottom to top, an elliptic fixed point undergoes a supercritical pitchfork bifurcation, it turns into a saddle fixed point and a pair of elliptic fixed points appears nearby.
The curve~$PD_1^d$ is related to a period-doubling bifurcation of a fixed point. At crossing~$PD_1^+$, the elliptic fixed point becomes saddle and in its neighborhood there appears an elliptic 2-periodic orbit. The curve~$PD_1^-$ corresponds for two different types of period-doubling bifurcation: the bottom part is responsible for a subcritical period-doubling bifurcation of saddle fixed point (which becomes an elliptic fixed point surrounded by a saddle 2-periodic orbit), while at the upper part of~$PD_1^-$, an elliptic fixed point undergoes a supercritical period-doubling bifurcation. See~\cite{GonGonOvs2017} for more details on these bifurcations. The other curves~$P_3^d$ and~$PF_3^d$ are associated with parabolic and conservative pitchfork bifurcations of 3-periodic orbits, respectively. They were discovered by~\cite{DulinMeiss2000} and they have too cumbersome expressions to be presented, so we omit their equations.
	
We note that the point~$(M_1,M_2)=(0,-1)$, corresponding to the case~$a_{02}=0$, is the cusp point of the curves~$PF_3^+$ and~$PF_3^-$ in both maps~$H^+_3$ and~$H^-_3$, respectively. This point also lies in~$P_3^d$ and~$l^d_{1:3}$. Getting inside the region bounded by the curve~$PF_3^+$ ($PF_3^-$), the symmetric elliptic (saddle) 3-periodic orbit becomes saddle (elliptic) and a pair of nonsymmetric elliptic (saddle) orbits of the same period emerges for~$H^+_3$ ($H^-_3$). It is worth mentioning that in the nonconservative reversible case, instead of nonsymmetric elliptic (saddle) orbits, a pair of stable and unstable orbits (saddles with the Jacobians~$J>1$ and~$J<1$) emerges as a result of a reversible pitchfork bifurcation.
	
In Figure~\ref{Fig:HenonCubicBD_H3+} we present a sequence of phase portraits near the 1:3 resonance for a fixed~$M_2$ for~$H^+_3$. Let us give some details of the bifurcations which take place in the horizontal line~$M_2=-1.25$. For~$M_1$ on the right hand side of the right branch of~$P^+_3$ (see phase portrait~\raisebox{.5pt}{\textcircled{\raisebox{-.9pt} {1}}} for~$M_1=1.2$), there is an elliptic fixed point~$E_1$ which is born  along with a saddle fixed point (the latter is not presented in the phase portrait) after a parabolic bifurcation at~$P^+_1$. At crossing~$P^+_3$ a parabolic bifurcation of 3-periodic orbits occurs and close to the elliptic point~$E_1$ there appear 3-periodic orbits~$E_3$ and~$S_3$ of elliptic  and saddle  type, respectively, in the right hand side of~$P_3^+$ (see, for example, phase portrait~\raisebox{.5pt}{\textcircled{\raisebox{-1.5pt} {2}}} for~$M_1=0.98$). In the 1:3 resonance curve~$l_{1:3}$, the saddle 3-periodic orbit~$S_3$ collides with the elliptic point~$E_1$ and they become the saddle fixed point~$P^{(1)}_{1:3}$ with 6 separatrices (see phase portrait~\raisebox{.5pt}{\textcircled{\raisebox{-.9pt} {3}}} for~$M_1\approx 0.915$). After this bifurcation, the saddle 3-periodic orbit~$S_3$ is reconstructed, and the homoclinic connections are transformed into heteroclinic cycles (see phase portrait~\raisebox{.5pt}{\textcircled{\raisebox{-.9pt} {4}}} for~$M_1=0.83$). At passage through the left curve~$PF^+_3$ the elliptic 3-periodic orbit~$E_3$ undergoes a supercritical pitchfork bifurcation: it becomes saddle~$\hat{S}_3$  and a pair of nonsymmetric elliptic 3-periodic orbits~$\hat{E}^{1}_3$ and~$\hat{E}^2_3$ appears nearby (see, for instance, phase portrait~\raisebox{.5pt}{\textcircled{\raisebox{-1.5pt} {5}}} for~$M_1=0.03$). Then the elliptic orbits~$\hat{E}^{1}_3$ and~$\hat{E}^2_3$ move away from~$\hat{S}_3$ and get closer to~$S_3$. At the same time, the separatrices of interior saddle~$\hat{S}_3$ increase until they connect with the separatrices of the exterior saddle~$S_3$ (the phenomenon of splitting of separatrices takes place). After some bifurcation related with homo/heteroclinic connections (phase portrait~\raisebox{.5pt}{\textcircled{\raisebox{-.9pt} {6}}} at~$M_1= 0$) the saddle and elliptic 3-periodic orbits are rotated. Afterwards, by the symmetry in the~$(M_1,M_2)$-plane, for~$M_1>0$, the periodic orbits undergo an inverse pitchfork bifurcation (the elliptic 3-periodic orbits~$\hat{E}^{1}_3$ and~$\hat{E}^2_3$ merge into the saddle orbit~$S_3$ which becomes elliptic~$E_3$) at~$PF^+_3$; a 1:3 resonance bifurcation (the saddle 3-periodic orbit~$\hat{S}_3$ is reconstructed after passing through the saddle fixed point~$P^{(2)}_{1:3}$ with 6 separatrices) takes place at~$l^+_{1:3}$; an inverse parabolic bifurcation (the saddle and elliptic 3-periodic orbits~$E_3$ and~$\hat{S}_3$ merge into the elliptic fixed point~$E_1$) occurs at crossing~$P^+_3$.
	
In Figure~\ref{Fig:HenonCubicBD_H3-} one can observe the bifurcations which occur at crossing the curves~$l^-_{1:3}$, $P^-_3$ and~$PF^-_3$ in the case of the map~$H_3^-$. We consider the horizontal line~$M_2=-0.8$. For~$M_1$ in the right hand side of~$P^-_3$ (see, for example, phase portrait~\raisebox{.5pt}{\textcircled{\raisebox{-.9pt} {1}}} for~$M_1=1.2$), there are an elliptic fixed point~$E_1$ and a saddle 2-periodic orbit (the latter is absent in the figure) which appear after a period-doubling bifurcation at~$PD^-_1$. Decreasing~$M_1$ and passing through the curve~$P^-_3$, elliptic and saddle 3-periodic orbits~$E_3$ and~$S_3$ show up surrounding the elliptic fixed point~$E_1$ (see, for instance, phase portrait~\raisebox{.5pt}{\textcircled{\raisebox{-.9pt} {2}}} for~$M_2=0.85$). Further, for~$M_1$ in the curve~$l_{1:3}^-$ the elliptic point~$E_1$ and the saddle orbit~$S_3$ are transformed into the saddle point~$P^{(1)}_{1:3}$ with 6 separatrices (see phase portrait~\raisebox{.5pt}{\textcircled{\raisebox{-.9pt} {3}}} at~$M_1\approx 0.74245$), and after crossing~$L_{1:3}^-$ the saddle 3-periodic orbit~$S_3$ is rotated reconstructing the homoclinic configuration into the heteroclinic connections (see phase portrait~\raisebox{.5pt}{\textcircled{\raisebox{-.9pt} {4}}} for~$M_1=0.04$). After that at crossing the right branch of~$PF^-_3$, the saddle 3-periodic orbit~$S_3$ goes through a subcritical pitchfork bifurcation: the saddle orbit becomes elliptic~$\hat{E}_3$ and in its neighborhood there appear two nonsymmetric saddle 3-periodic orbits~$\hat{S}^{1}_3$ and~$\hat{S}^2_3$ (as in phase portrait~\raisebox{.5pt}{\textcircled{\raisebox{-.9pt} {5}}} at~$M_1=0.025$). Varying further~$M_1$ the nonsymmetric saddle orbits~$\hat{S}^{1}_3$ and~$\hat{S}^2_3$ move away from the elliptic orbit~$\hat{E}_3$  toward the other elliptic orbit~$E_3$ (see, for instance, phase portrait~\raisebox{.5pt}{\textcircled{\raisebox{-.9pt} {6}}} for~$M_1= 0$). For~$M_1<0$, the two saddle orbits~$\hat{S}^{1}_3$ and~$\hat{S}^2_3$ get closer to~$E_3$. These three orbits undergo an inverse pitchfork bifurcation while crossing the left branch of~$PF^-_3$: the two saddle and elliptic 3-periodic orbits~$\hat{S}^{1}_3$,~$\hat{S}^2_3$ and~$\hat{E}_3$ collide into the saddle 3-periodic orbit~$S_3$. At crossing the left branch of~$l^-_{1:3}$, the rotation of the saddle 3-periodic orbit~$S_3$ takes place. Finally, for the parameters in the curve~$P^-_3$ the 3-periodic orbits~$\hat{E}_3$ and~$S_3$ disappear and the elliptic fixed point~$E_1$ remains.
	
\begin{remark}
The phase portraits in Figures~\ref{Fig:HenonCubicBD_H3+} and~\ref{Fig:HenonCubicBD_H3-} for values of parameters ($M_1,M_2$) and ($-M_1,M_2$) are central symmetric with respect to the origin~$x=y=0$. This is due to the fact that the maps~(\ref{eq:HenonMapCubic}) are invariant under the change~$x\to -x, y\to -y, M_1\to -M_1$. This also results in the symmetries of the bifurcation curves in the~$(M_1,M_2)$ parameter plane with respect to~$M_1=0$.
\label{rem2}
\end{remark}
	
\begin{remark}
In Figures~\ref{Fig:HenonCubicBD_H3+} and~\ref{Fig:HenonCubicBD_H3-}, the saddle separatrices are shown coinciding for simplicity, although they are not expected to be exactly merged since the phenomenon of splitting of separatrices occurs (see, for instance,~\cite{DelshamsGG16,DelshamsGG20} and references therein for more details about this phenomenon).
\end{remark}

\section{On the degeneracy of the~$p:q$ resonances with odd~$q>3$ in~$H^\pm_3$ with~$M_1=0$.}

Let us consider the conservative cubic H\'enon maps with~$M_1=0$. Due to Remark~\ref{rem2}, there is the central symmetry in the phase portraits~$x\to -x, y\to -y$. A normal form for the~$p:q$ resonance, where~$p$ and~$q$ are mutually prime and~$q > 3$ is odd, is as follows
$$
\bar z = e^{i 2\pi p/q} (z+\Omega(|z|^2)z^*+A(z^*)^{q-1} + Bz^{q+1} + Cz(z^*)^q).
$$
The corresponding flow normal form in this case can be written as follows:
\begin{equation}
\dot z = iz + \Omega(|z|^2)z^* + A(z^*)^{q-1} + Bz^{q+1} + Cz(z^*)^q.
\label{eq:pqnf}
\end{equation}
It is conservative and reversible with respect to the involution~$(t, z) \to (-t, z^*)$. Applying the involution gives
$$
- \dot z^* =i z^* + \Omega(|z|^2)z + A(z)^{q-1} + B(z^*)^{q+1} + Cz^*(z)^q.
$$
Further, let us consider the complex conjugate system
$$
- \dot z =-i z + \Omega^*(|z|^2)z^*+ A^*(z^*)^{q-1} + B^*(z)^{q+1} + C^*z(z^*)^q.
$$
Thus, the reversibility implies that~$\Omega = - \Omega^*, A=-A^*, B=-B^*, C=-C^*$, i.e. all coefficients in \eqref{eq:pqnf} should be pure imaginary. Therefore, equation \eqref{eq:pqnf} takes the form
\begin{equation}
\dot z = iz + i\Omega(|z|^2)z^* + iA(z^*)^{q-1} + iBz^{q+1} + iCz(z^*)^q,           
\label{eq:pqnf2}
\end{equation}
where all coefficients~$\Omega, A, B, C$ are real. The conservativity condition means zero divergence, i.e.
$$
\frac{\partial \dot z}{\partial z} + \frac{\partial \dot z^*}{\partial z^*} \equiv 0
$$
As it follows from \eqref{eq:pqnf2}
$$
\frac{\partial \dot z}{\partial z} =  i + i\Omega^\prime (z^*)^2 + i(q+1)Bz^q + iC(z^*)^q
$$
and
$$
\frac{\partial \dot z^*}{\partial z^*} = - i  - i \Omega^\prime (z)^2  -i (q+1)B(z^*)^q  - i C(z)^q
$$
Thus, the conservativity condition is
$$
C+B(q+1)=0.
$$
The symmetry~$z\to -z$ implies~$A=B=C=0$ since~$q$ is odd. Thus, the above condition is automatically fulfilled, and the following result holds.

\begin{lemma}\label{lm:pq}
For maps~$H_3^{\pm}$ with~$M_1=0$, any~$p:q$ resonance at the fixed point~$O(0,0)$, where~$q> 3$ is odd, is at least triple degenerate.
\end{lemma}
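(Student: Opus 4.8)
The plan is to turn the formal computation displayed just above the statement into a genuine proof by making precise how the three symmetries of $H_3^\pm$ at $M_1=0$ --- reversibility with respect to $h:z\mapsto z^*$, area-preservation, and the central symmetry $\sigma:z\mapsto -z$ of Remark~\ref{rem2} --- are inherited by the resonant normal form. First I would recall the standard structure of a $p:q$ resonance for an area-preserving map: after a local polynomial change of coordinates, $H_3^\pm$ near the fixed point with multipliers $e^{\pm 2\pi i p/q}$ ($\gcd(p,q)=1$, $q>3$ odd) is brought to a $\mathbb{Z}_q$-resonant normal form whose dynamics, modulo terms of order greater than $q+1$, is that of the interpolating flow \eqref{eq:pqnf}, $\dot z = iz + \Omega(|z|^2)z^* + A(z^*)^{q-1} + Bz^{q+1} + Cz(z^*)^q$. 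The one thing I really need from this construction is that the normalizing transformation can be chosen to commute simultaneously with $\sigma$ and with the area form, and to intertwine correctly with the reversor $h$; this holds because $\sigma$ and the symplectic form commute with the semisimple linear part $z\mapsto e^{2\pi i p/q}z$, while $h$ is a linear reversor of it, and in each case the homological equations solved at successive orders admit solutions carrying the corresponding symmetry. This equivariant/symplectic/reversible normal-form bookkeeping --- for which I would cite \cite{Sevryuk86} and \cite{GLRT14} --- is the only genuinely technical point; everything after it is exactly the algebra already written in the text.

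Granting this, the three reductions in the text go through verbatim. Reversibility of \eqref{eq:pqnf} under $(t,z)\mapsto(-t,z^*)$, compared with the complex-conjugate equation, forces $\Omega=-\Omega^*,\ A=-A^*,\ B=-B^*,\ C=-C^*$, so all coefficients are purely imaginary and the field becomes \eqref{eq:pqnf2} with $\Omega,A,B,C\in\mathbb{R}$. Area-preservation is equivalent to the vanishing of the divergence $\partial_z\dot z+\partial_{z^*}\dot z^*\equiv 0$, which after substituting \eqref{eq:pqnf2} collapses to the single relation $C+(q+1)B=0$. Finally, $\sigma$-equivariance requires the right-hand side of \eqref{eq:pqnf2} to be odd in $z$; since $q$ is odd, the monomials $(z^*)^{q-1}$, $z^{q+1}$ and $z(z^*)^q$ have \emph{even} total degrees $q-1$, $q+1$ and $q+1$ respectively, hence are even under $\sigma$, and therefore $A=B=C=0$, while the twist term $i\Omega(|z|^2)z^*$ is odd and survives (and the relation $C+(q+1)B=0$ is then automatically satisfied).

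Consequently the leading resonant coefficient $A$, of order $q-1$, together with the next two, $B$ and $C$, of order $q+1$, all vanish --- which is precisely the statement that the $p:q$ resonance at $O$ is at least triple degenerate. I would close with the remark that $\Omega'(0)$ is generically nonzero, so the degeneracy is of the kind that leaves the fixed point surrounded by a garland of $q$-periodic islands rather than turning it into a saddle with $2q$ separatrices, consistent with the phase portraits in Figures~\ref{Fig:HenonCubicBD_H3+} and~\ref{Fig:HenonCubicBD_H3-}. To repeat, the hard part is not any of these computations but the claim that one normalizing transformation can be taken symplectic, $\sigma$-equivariant and reversor-compatible all at once; once that is in place the lemma follows immediately.
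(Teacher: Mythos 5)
Your proof follows essentially the same route as the paper's: reversibility with respect to $(t,z)\mapsto(-t,z^*)$ forces all coefficients of the flow normal form \eqref{eq:pqnf} to be purely imaginary, the zero-divergence condition gives $C+(q+1)B=0$, and the central symmetry $z\to -z$ together with $q$ odd kills $A$, $B$ and $C$, yielding the triple degeneracy. Your only addition is the explicit justification that the normalizing transformation can be chosen simultaneously symplectic, $\sigma$-equivariant and reversor-compatible --- a point the paper takes for granted --- so the argument is, if anything, slightly more complete than the original.
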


In Figure~\ref{fig:FigPQRes} we illustrate this result for both cubic H\'enon maps~$H_3^{\pm}$. In Figures~\ref{fig:FigPQRes}a,b we show phase portraits near the degenerated 1:5 and 1:7 resonances for the map~$H_3^{+}$. These resonances occur at~$M_2 \approx 0.575$ and~$M_2 \approx 1.15$, respectively. As a result, four periodic orbits emerge: a pair of symmetric periodic saddles (colored in light and dark green, respectively) and a pair of nonsymmetric periodic elliptic orbits (colored in grey and black, respectively). In Figure~\ref{fig:FigPQRes}c,d we demonstrate phase portraits for the map~$H_3^{-}$. In contrast to the previous case, here periodic elliptic orbits are symmetric while periodic saddles are nonsymmetric. Here the 1:5 resonance occurs at~$M_2 \approx 0.66$ and the 1:7 resonance takes place at~$M_2 \approx 1.36$.

\begin{figure}[tbh]
\center{\includegraphics[width=0.8\linewidth]{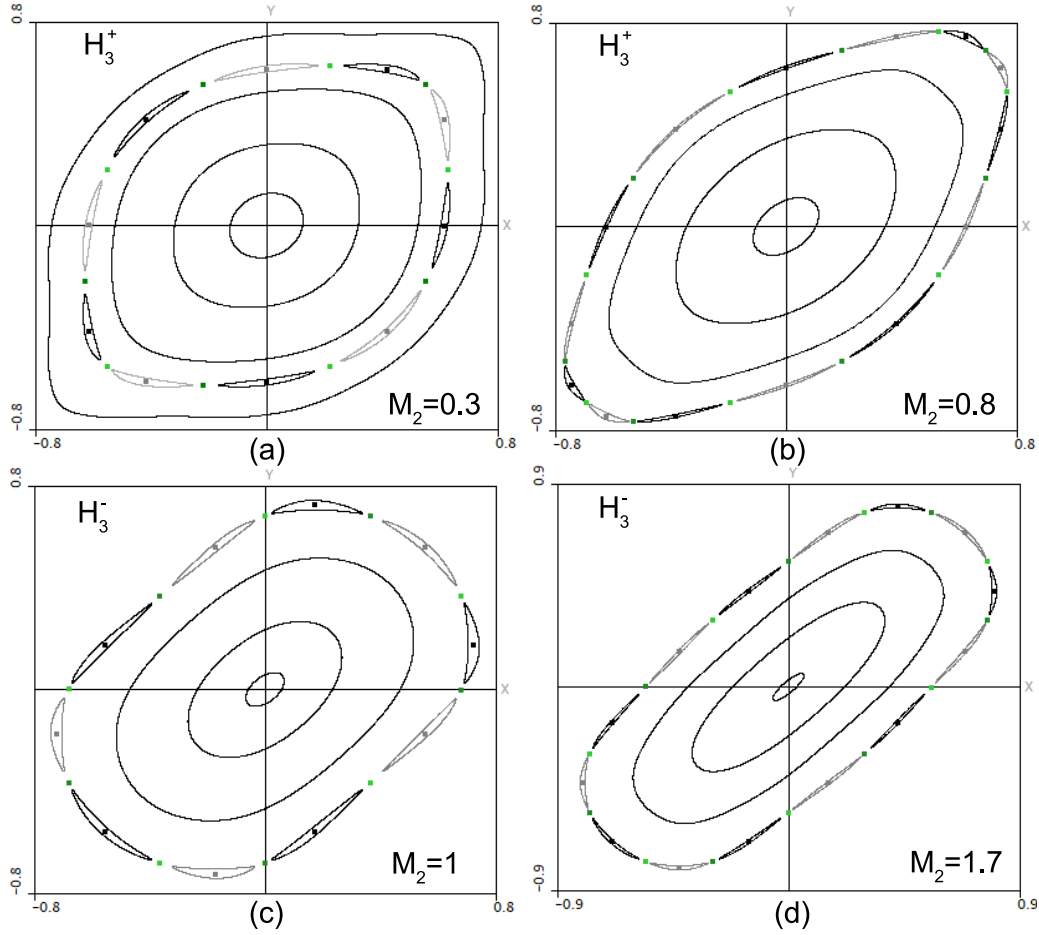} }
\caption{{\footnotesize Phase portraits near the degenerated 1:5 (left column) and 1:7 (right column) resonances in the conservative cubic H\'enon maps~$H_3^{+}$ (top row) and~$H_3^{-}$ (bottom row).}}
\label{fig:FigPQRes}
\end{figure}

\section{On the 1:3 resonance for the perturbed maps~$\tilde{H}^{+}_3(\varepsilon)$ and~$\tilde{H}^{2+}_3$.}\label{sec:H_3plus}
	
In this section we describe symmetry-breaking bifurcations near the 1:3 resonance in~$\tilde{H}^{+}_3(\varepsilon)$ in the form~(\ref{eq:rev2cub}). Note that the bifurcation picture is qualitatively similar for~$\tilde{H}^{2+}_3(\varepsilon)$, defined in~(\ref{eq:H2pl3expl}).
	
We apply reversible non-conservative perturbations to the conservative map~$H^+_3$ and study their impact on the structure of the 1:3 resonance. 
We display the bifurcation diagram for the fixed perturbation parameter~$\varepsilon=0.05$ in Figure~\ref{Fig:HenonPerturb13}. In comparison to the unperturbed case, we can see the slightly changed bifurcation curves~$l_{1:3}^+$, $P_3^+$ and~$PF_3^+$, related to the 1:3 resonance, a parabolic bifurcation of the appearance of 3-periodic orbits and a reversible pitchfork bifurcation of 3-periodic orbits, respectively. Unlike the conservative case, the bifurcation curves are not symmetric since the invariance of the map with respect to the change~$M_1\to -M_1$ (see also Remark~\ref{rem2}) is not conserved anymore at adding the perturbation. However, the symmetry in the phase portraits with respect to the straight line~$y=x$ is preserved due to the reversibility.  Also the curve~$PF_3^+$ here is associated with the symmetry-breaking bifurcations which are non-conservative reversible pitchforks.

\begin{figure}[t]
\hspace{0.2cm}
\begin{minipage}{1\linewidth}
\center{\includegraphics[width=0.9\columnwidth]{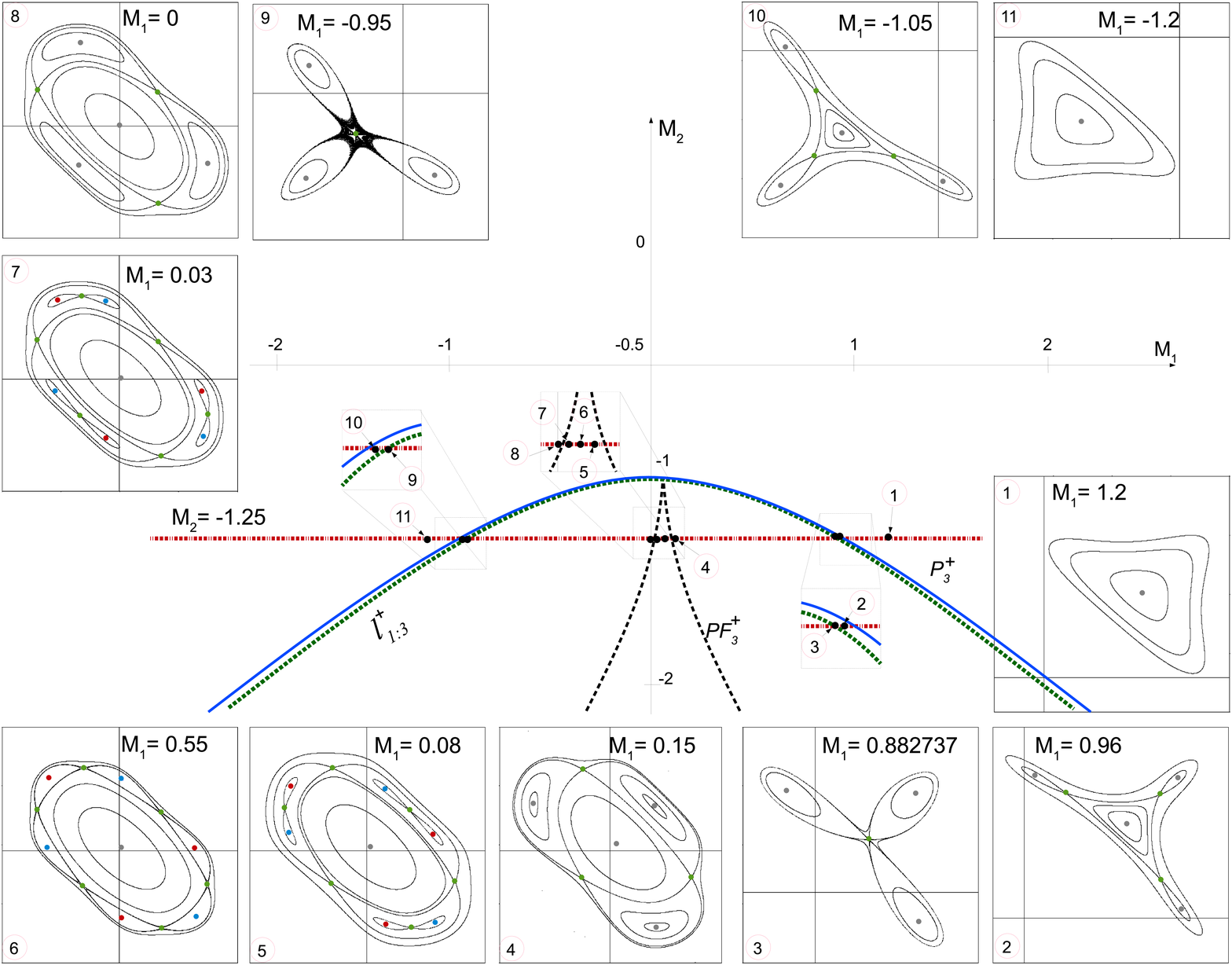}}
\end{minipage}
\caption{
Bifurcation diagram for the reversible non-conservative map~$\tilde{H}_3^+(\varepsilon)$  for~$\varepsilon = 0.05$ near the 1:3 resonance. The bifurcations curve~$l_{1:3}^+$ is the curve of the 1:3 resonance of fixed points, while the curves~$P_3^+$ and~$PF_3^+$ are related to parabolic and reversible pitchfork (non-conservative symmetry-breaking) bifurcations of 3-periodic orbits, respectively. For the fixed value of~$M_2=-1.25$, the phase portraits are given for~$M_1=-1.2, -1.05, -.0.95, 0, 0.003, 0.055, 0.08, 0.15, 0.8827, 0.96$ and~$1.2$.  The green, grey, red and blue stand for saddle, elliptic, stable (sinks) and unstable (sources) fixed points and periodic orbits, respectively.
}
\label{Fig:HenonPerturb13}
\end{figure}
	
Let us describe the sequence of bifurcations which occur for the fixed parameter~$M_2=-1.25$ and decreasing the parameter~$M_1$ in Figure~\ref{Fig:HenonPerturb13}. To the right hand side of the curve~$P_{3}^+$ (see, for example, the phase portrait~\raisebox{.5pt}{\textcircled{\raisebox{-.9pt} {1}}}  at~$M_1=1.2$), the map~$\tilde{H}_3^+(\varepsilon)$ (and also~$\tilde{H}^{2+}_3$) has an elliptic fixed point~$E_1$.  At crossing~$P_3^+$, there appear a symmetric elliptic 3-periodic orbit~$E_3$ and a symmetric saddle 3-periodic orbit~$S_3$ with homoclinic loops (see the phase portrait~\raisebox{.5pt}{\textcircled{\raisebox{-.9pt} {2}}} for the parameter value~$M_1=0.96$). Then approaching the curve~$l_{1:3}^+$, the orbits~$S_3$ and~$E_1$  merge into a saddle fixed point with 6 separatrices (phase portrait~\raisebox{.5pt}{\textcircled{\raisebox{-.9pt} {3}}} at~$M_1\approx 0.882737$). After that the saddle splits into elliptic point~$E_1$ and saddle 3-periodic orbit~$S_3$, now~$S_3$ is rotated and form heteroclinic connections (phase portrait~\raisebox{.5pt}{\textcircled{\raisebox{-.9pt} {4}}} for~$M_1=0.15$). Further, the elliptic 3-periodic orbit~$E_3$ undergoes a reversible pitchfork bifurcation at crossing the right branch of the curve~$PF_3^+$: the elliptic 3-periodic orbit~$E_3$ breaks into saddle, stable and unstable 3-periodic orbits~$\hat{S}_3$,~$\hat{A}_3$ and~$\hat{R}_3$, respectively (phase portrait~\raisebox{.5pt}{\textcircled{\raisebox{-.9pt} {5}}} for~$M_1=0.08$). Note that under this bifurcation a pair of nonsymmetric 3-periodic orbits~$\hat{A}_3$ and~$\hat{R}_3$ is born. The separatrices of each component of the new saddle~$\hat{S}_3$ tend to the the corresponding components of the stable and unstable orbits~$\hat{A}_3$ and~$\hat{R}_3$ at forward and backward iterations, respectively, forming homoclinic loops, and all the three components are surrounded by invariant curves in the similar way as in Figure~\ref{Fig:ReversiblePF}(a). Decreasing~$M_1$, the stable and unstable orbits~$\hat{A}_3$ and~$\hat{R}_3$ move away from each other. Moreover, one of the components of the stable (or unstable) 3-periodic orbit gets away from the symmetry line~$y=x$, while the other two components move closer each other and to the line~$y=x$. At the same time, the separatrices of the inner saddles become larger. At some moment (close to~$M_1=0.055$, see phase portrait~\raisebox{.5pt}{\textcircled{\raisebox{-.9pt} {6}}}), the separatrices of the inner and exterior saddles~$\hat{S}_3$ and~$S_3$ merge (not exactly due to splitting of separatrices) and the transformation of homoclinic/heteroclinic connections takes place after which all the involved 3-periodic orbits are rotated (as in phase portrait~\raisebox{.5pt}{\textcircled{\raisebox{-1.5pt} {7}}} for~$M_1=0.03$). At crossing the left branch of~$PF_3^+$ an inverse pitchfork bifurcation occurs: the saddle, stable and unstable 3-periodic orbits~$S_3$,~$\hat{A}_3$ and~$\hat{R}_3$ merge into an elliptic 3-periodic orbit~$E_3$ (phase portrait~\raisebox{.5pt}{\textcircled{\raisebox{-.9pt} {8}}} at~$M_1=0$). The remaining saddle  3-periodic orbit~$\hat{S}_3$ and elliptic point~$E_1$ collide at crossing the left branch of~$l_{1:3}^+$, there is a saddle point with 6 separatrices for the parameters in this curve (for a parameter close to~$M_1=-0.95$, see phase portrait~\raisebox{.5pt}{\textcircled{\raisebox{-.9pt} {9}}}). After the bifurcation, the 6-separatrix saddle splits into saddle and elliptic 3-periodic orbits~$\hat{S}_3$ and~$E_3$ in the left hand side of~$l^+_{1:3}$, now~$\hat{S}_3$ is rotated by~$\pi/3$ and the heteroclinic cycles change into homoclinic loops (phase portrait~\raisebox{.5pt}{\textcircled{\raisebox{-.9pt} {\scriptsize 10}}}  for~$M_1=-1.05$). Finally, we transit the curve~$P_3^+$ and the 3-periodic orbits~$\hat{S}_3$ and~$E_3$ disappear (phase portrait~\raisebox{.5pt}{\textcircled{\raisebox{-.9pt} {\scriptsize 11}}} for~$M_1=-1.2$).
	
Thus, at transition into the domain lying below the curve~$PF_3^+$, there appears a symmetric pair of nonsymmetric stable and completely unstable 3-periodic orbits. This fact is relevant for detecting mixed dynamics in maps with symmetric cubic homoclinic tangencies whose truncated first return map is~$\tilde{H}^+_3(\varepsilon)$ or~$\tilde{H}^{2+}_3(\varepsilon)$. Thus, there are Newhouse domains where maps with infinitely many attracting, repelling, saddle and elliptic periodic orbits are dense.
		
\section{On the 1:3 resonance in the perturbed maps~$\tilde{H}^{-}_3(\varepsilon)$ and~$\tilde{H}^{2-}_3(\varepsilon)$}\label{sec:H_3minus}
	
In this section we study how the 1:3 resonance evolves under the perturbation in the case of the map~$\tilde{H}^{-}_3(\varepsilon)$. The corresponding bifurcation diagram is illustrated in Figure~\ref{Fig:HenonCubicBD_tldH3-}. Unlike the unperturbed case in Figure~\ref{Fig:HenonCubicBD_H3-}, the 1:3 resonance curve~$l_{1:3}^-$ as well as the curves~$P_3^-$ and~$PF_3^-$ related to parabolic and reversible pitchfork bifurcations of 3-periodic orbits, respectively, are nonsymmetric and slightly moved, since the symmetry with respect to~$M_1=0$ (see also Remark~\ref{rem2}) is not conserved anymore. Also~$PF_3^-$ corresponds to a non-conservative symmetry-breaking bifurcation after which two nonsymmetric saddle 3-periodic orbits appear, one of them with the Jacobian~$J> 1$ and the other with the Jacobian~$J< 1$. Note that for~$\tilde{H}^{2-}_3(\varepsilon)$, the bifurcation diagram and bifurcation sequence of phase portraits is qualitatively the same.
	
\begin{figure}[t]
\hspace{0.2cm}
\begin{minipage}{1\linewidth}			 \center{\includegraphics[width=0.9\columnwidth]{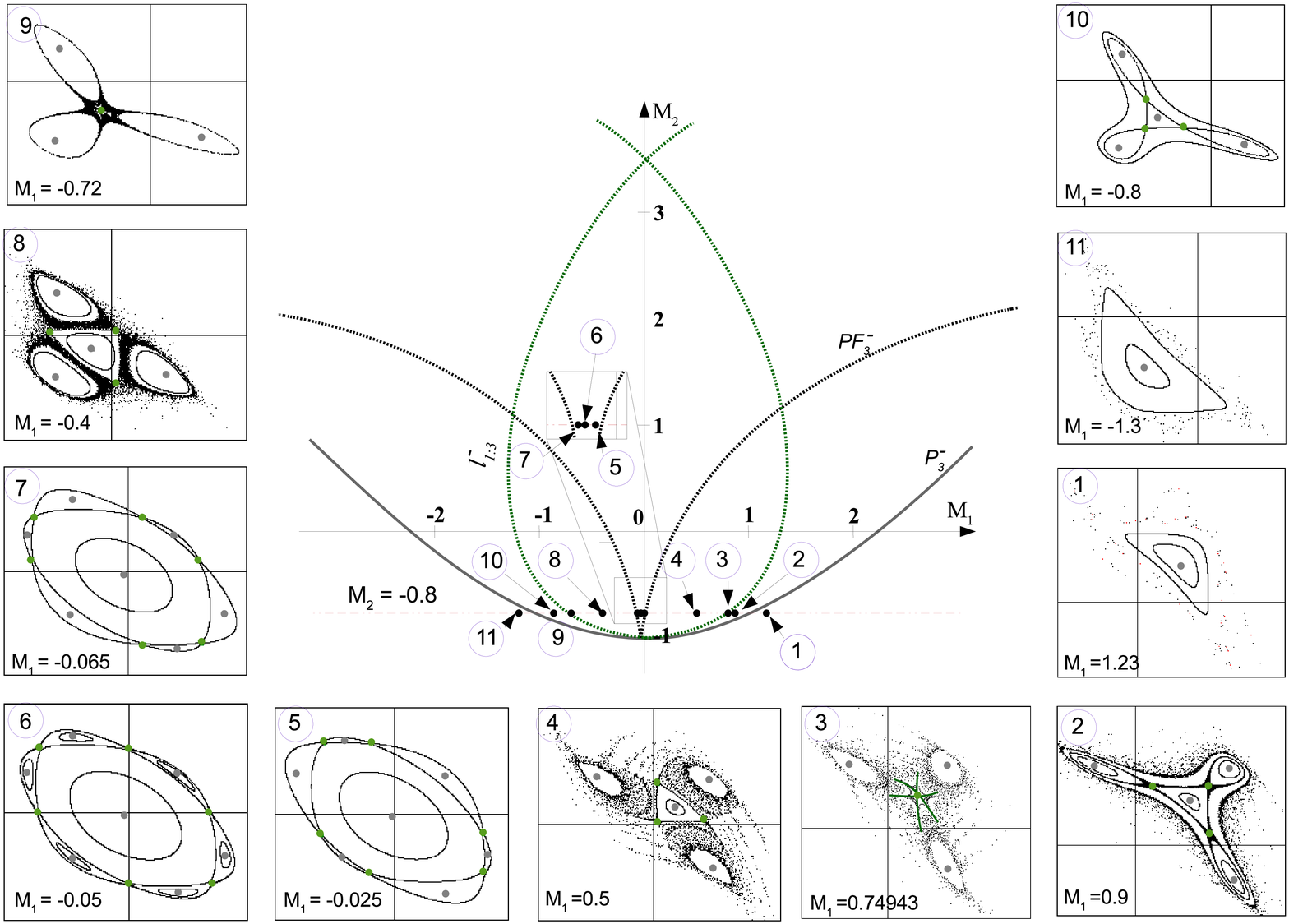}}
\end{minipage}
\caption{{
Bifurcation diagram for the the reversible non-conservative map~$\tilde{H}_3^-(\varepsilon)$  for~$\varepsilon = 0.05$ near the 1:3 resonance. The bifurcations curve~$l_{1:3}^-$ is related to bifurcations of the 1:3 resonance of fixed points, while the curves~$P_3^-$ and~$PF_3^-$ correspond to parabolic and reversible pitchfork (non-conservative symmetry-breaking) bifurcations of 3-periodic orbits, respectively. For the fixed value of~$M_2=-0.8$, the phase portraits are given for~$M_1=-1.3, -0.8, -0.72, -0.4, -0.065, -0.005, -0.025, 0.5, 0.7494, 0.9$ and~$1.23$.  The green and grey points stand for saddle and elliptic orbits, respectively.  		
}}
\label{Fig:HenonCubicBD_tldH3-}
\end{figure}
	
Let us give details on the bifurcations taking place in the bifurcation diagrams in Figure~\ref{Fig:HenonCubicBD_tldH3-}. We choose the horizontal line~$M_2=-0.8$. We start with  the parameters in the right hand side of~$P_3^-$ (see, for instance, phase portrait~\raisebox{.5pt}{\textcircled{\raisebox{-.9pt} {1}}} for~$M_1=1.23$), where the map~$\tilde{H}^{-}_3(\varepsilon)$ (and~$\tilde{H}^{2-}_3(\varepsilon)$) has an elliptic fixed point~$E_1$. The point~$E_1$ undergoes a parabolic bifurcation at crossing~$P_3^-$ and there appear 3-periodic orbits~$S_3$ and~$E_3$ of saddle  and elliptic  type close to~$E_1$ for the parameters in the left hand side of~$P_3^-$ (phase portrait~\raisebox{.5pt}{\textcircled{\raisebox{-.9pt} {2}}} for~$M_1=0.9$). Note that the three components of the saddle orbit~$S_3$ have homoclinic loops. In the 1:3 resonance curve~$l_{1:3}^-$ (at~$M_1\approx 0.74943$), the saddle 3-periodic orbit~$S_3$ and the elliptic point~$E_1$ merge into a saddle fixed point with 6 separatrices (see phase portrait~\raisebox{.5pt}{\textcircled{\raisebox{-.9pt} {3}}}) which for the parameters in the left hand side of~$l_{1:3}^-$ breaks into saddle and elliptic 3-periodic orbits~$S_3$ and~$E_3$  again, but in the left hand side of~$l_{1:3}^-$ the orbit~$S_3$ is rotated by~$\pi/3$ and the homoclinic loops of~$S_3$ are reorganized into heteroclinic connections (see, for example, phase portrait~\raisebox{.5pt}{\textcircled{\raisebox{-.9pt} {4}}}  at~$M_1=0.5$). Crossing the right branch of the curve~$PF_3^-$, the saddle orbit~$S_3$ undergoes the subcritical pitchfork bifurcation. As a result, the saddle orbit is converted into an elliptic 3-periodic orbit~$\hat{E}_3$ and two saddle 3-periodic orbits~$\hat{S}_3^1$ and~$\hat{S}_3^2$ show up nearby (see the phase portrait~\raisebox{.5pt}{\textcircled{\raisebox{-.9pt} {5}}} for~$M_1=-0.025$). Moreover, the Jacobian in~$\hat{S}_3^1$ is greater than 1 and the Jacobian in~$\hat{S}_3^2$ is less than 1. Decreasing the parameter~$M_1$, the saddle orbits~$\hat{S}_3^1$ and~$\hat{S}_3^2$ distance from each other and they move toward the elliptic orbit~$E_3$ (phase portraits~\raisebox{.5pt}{\textcircled{\raisebox{-.9pt} {6}}}  and~\raisebox{.5pt}{\textcircled{\raisebox{-.9pt} {7}}}  for~$M_1=-0.05$ and~$M_1=-0.065$). In the left branch of~$PF_3^-$, an inverse pitchfork bifurcation takes place: the orbit~$E_3$ merges along with~$\hat{S}_3^1$ and~$\hat{S}_3^2$ into a saddle 3-periodic orbit~$S_3$ (phase portrait~\raisebox{.5pt}{\textcircled{\raisebox{-.9pt} {8}}} for~$M_1=-0.4$). Afterwards, the reconstruction of~$S_3$ happens in the 1:3 resonance curve~$l_{1:3}^-$ (phase portrait~\raisebox{.5pt}{\textcircled{\raisebox{-.9pt} {9}}} and~\raisebox{.5pt}{\textcircled{\raisebox{-.5pt} {\scriptsize 10}}} for~$M_1=-0.72$ and~$M_1=-0.8$). Finally, the 3-periodic orbits~$\hat{E}_3$ and~$S_3$ disappear at crossing~$P_3^-$ (phase portrait~\raisebox{.5pt}{\textcircled{\raisebox{-.5pt} {\scriptsize 11}}}) for~$M_1=-1.3$).
	
Note that in the domain above the curve~$PF_3^-$ there emerges a symmetric pair of nonsymmetric saddle 3-periodic orbits whose Jacobians are greater and less than 1. This configuration also implies the existence of mixed dynamics in maps with cubic homoclinic tangencies whose truncated first return map is~$\tilde{H}^{-}_3(\varepsilon)$ or~$\tilde{H}^{2-}_3(\varepsilon)$.  Also in~$\tilde{H}^-_3(\varepsilon)$ itself we show numerically  the existence of mixed dynamics for the parameters from the domain inside~$PF_3^-$ since heteroclinic connections between two saddle orbits, one with the Jacobian~$J>1$ and the other with the Jacobian~$J<1$, leads to the presence of the so-called Lamb-Stenkin non-transversal heteroclinic cycle~\cite{LambStenkin2004}. See more details in Section~\ref{sec:md}.

\begin{figure}[t!]
\begin{minipage}[h]{1\linewidth}
\center{\includegraphics[width=0.9\columnwidth]{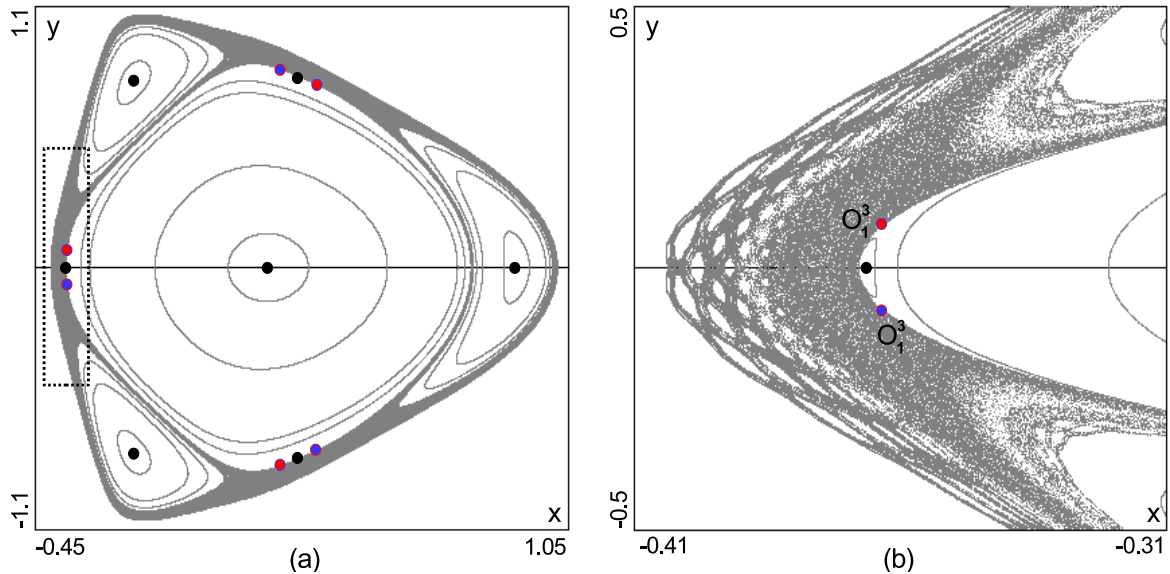}}
\end{minipage}
\caption{{Phase portrait and the zoomed fragment for the map~$\tilde{H}_3^-(\varepsilon)$ in~\eqref{eq:rev2cub} for~$M_1 = -0.364, M_2 = -0.5$ and~$\varepsilon = 0.3$. For the convenience, the phase portrait is rotated by~$\pi/4$. In this representation, the horizontal axis becomes~$Fix(h)$. The chaotic dynamics (in the gray region) seems conservative (the phase portrait is self-symmetric with respect to the horizontal axis). The orbits~$\hat{S}^1_3$ and~$\hat{S}^2_3$ are the pair of non-conservative saddle 3-periodic orbits with the Jacobians~$J(\hat{S}^1_3) = 0.995<1$ and~$J(\hat{S}^2_3) = 1.005>1$.}}
		\label{fig:HenonChaos}
\end{figure}
	
\section{Numerical evidence of mixed dynamics in the perturbed map~$\tilde{H}_3^-(\varepsilon)$ }\label{sec:md}
	
In this section we provide a numerical evidence of the existence of mixed dynamics in the perturbed map~$\tilde{H}_3^-(\varepsilon)$ in~\eqref{eq:rev2cub} for which in Section~\ref{sec:H_3minus} we show the existence of a pair of saddle 3-periodic orbits~$\hat{S}^1_3$ and~$\hat{S}^2_3$  with the Jacobians~$J<1$ and~$J>1$, respectively. Recall that these orbits appear due to a subcritical reversible pitchfork bifurcation of the symmetric saddle 3-periodic orbit~$S_3$. For better visibility, we take a quite large value of perturbation,~$\varepsilon = 0.3$. In Figure~\ref{fig:HenonChaos} we show the phase portraits of~$\tilde{H}_3^-(\varepsilon)$  for~$M_1 = -0.364$ and~$M_2 = -0.5$. The orbits~$\hat{S}^1_3$ with~$J < 1$,~$\hat{S}^2_3$ with~$J > 1$ and symmetric elliptic orbits are marked by blue,  red and black bold points, respectively. For the convenience, we rotate the phase portraits by~$\pi/4$, then the horizontal axis corresponds to~$Fix(h)$.
	
From Figure~\ref{fig:HenonChaos} it is clear to see that the phase portrait is self-symmetric with respect to the horizontal axis which means that the attractor of the system seems coincident with the repeller. Moreover, we are not able to find periodic sinks and sources nor even nonsymmetric orbits (except for  points~$\hat{S}^1_3$ and~$\hat{S}^2_3$) which would confirm mixed dynamics in~$\tilde{H}_3^-(\varepsilon)$.
	
However, we find a non-transversal heteroclinic cycle of Lamb-Stenkin type~\cite{LambStenkin2004} which connects~$\hat{S}^1_3$ and~$\hat{S}^2_3$. As it was shown in~\cite{LambStenkin2004},  bifurcations of such cycles lead to a reversible mixed dynamics. A schematic representation of this cycle is shown in Figure~\ref{fig:nontrans_cycle}a. The numerically obtained cycle is presented in Figure~\ref{fig:nontrans_cycle}b. From this figure, one can see that the stable and unstable manifolds of~$\hat{S}^1_3$ and~$\hat{S}^2_3$ have both transversal (see also the zoomed region near~$\hat{S}^2_3$ in Figure~\ref{fig:nontrans_cycle}c) and non-transversal (see the zoomed fragment in Figure~\ref{fig:nontrans_cycle}b) intersections. Thus, we can state that the chaotic dynamics presented in the gray zone in Figure~\ref{fig:HenonChaos} is mixed.
	
\begin{figure}[]
\begin{minipage}[h]{1\linewidth}
\center{\includegraphics[width=0.9\columnwidth]{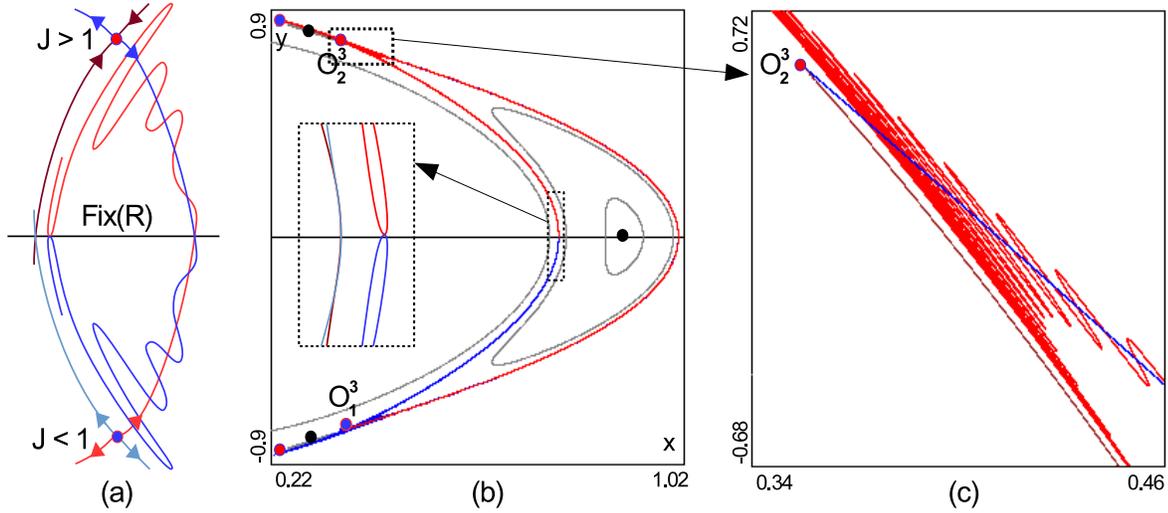}}
\end{minipage}
\caption{{(a) A schematic representation of non-transversal heteroclinic cycle of Lamb-Stenkin type. (b), (c) Non-transversal heteroclinic cycle connected saddles~$\hat{S}^1_3$ and~$\hat{S}^2_3$ in the map~$\tilde{H}_3^-(\varepsilon)$ at~$M_1 = -0.364, M_2 = -0.5$ and~$\varepsilon = 0.3$. A pair of manifolds~$W_1^s$ and~$W_1^u$ intersects transversally, while the other pair~$W_2^s$ and~$W_2^u$ has a quadratic tangency.}}
		\label{fig:nontrans_cycle}
\end{figure}
	
Also in this section we would like to note that the presence of mixed dynamics near elliptic points of two-dimensional reversible maps plays an important role. 
As well-known, the phase portrait near an elliptic point of a two-dimensional reversible diffeomorphism is organized in many details as in the conservative case. There is also a continuum of KAM-curves surrounding the elliptic point. The KAM-curves are separated by resonant zones~\cite{Sevryuk86}. However, the behavior in the resonant zones for reversible maps principally differs from the conservative ones, see, for example, Figures~\ref{fig:elliptic_resonance}a and~\ref{fig:elliptic_resonance}b. In the conservative setting,~$\varepsilon$-orbits can run away from any neighborhood of an elliptic point, i.e. such point is not stable under permanently acting perturbations (Lyapunov instability by~$\varepsilon$-orbits)~\cite{GonTur2017}, see Figure~\ref{fig:elliptic_resonance}a.

On the other hand, in the reversible nonconservative case, as it follows from~\cite{GLRT14, GonTur2017}, it is typical when in resonant zones periodic saddle points alternate with symmetric pairs of sinks and sources, see Figure~\ref{fig:elliptic_resonance}(b). In this case, it is possible when there exist intersecting absorbing domain~$B_A$ and repelling domain~$B_R$ around an elliptic point such that forward as well as backward~$\varepsilon$-orbits of any point, that belongs to the intersection~$B_A \cap B_R$, cannot leave any neighborhood of this elliptic point~\cite{GonTur2017,GonchenkoGK20}. These resonances are called \emph{impassable} or \emph{isolated}. In future papers, we plan to study such resonances for the perturbed reversible non-conservative H\'enon maps.
	
\begin{figure}[t]
\begin{minipage}[h]{1\linewidth}
 \center{\includegraphics[width=0.9\columnwidth]{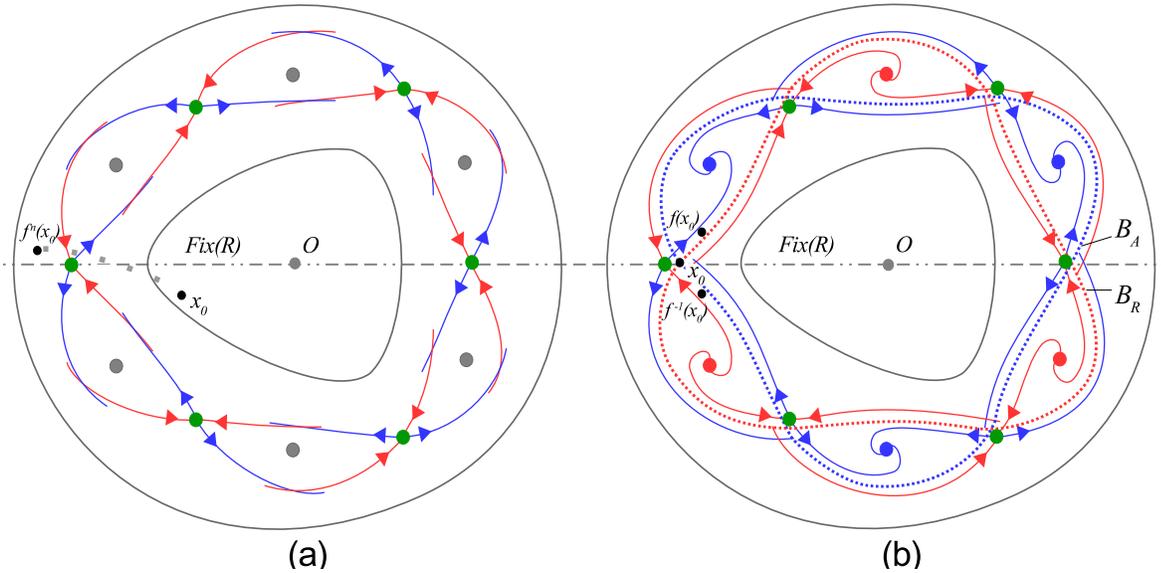}}
\end{minipage}
\caption{{Different types of behavior in resonant zones of a symmetric elliptic point in the conservative (a) and reversible (b) cases. Periodic elliptic orbits are marked by gray bold points, while periodic sinks and sources are colored in blue and red, respectively. In plot (b) the absorbing domain~$B_A$ of a sink orbit (bounded by the blue dashed curves) intersects with the repelling domain of the source orbit (bounded by the red dashed curves). Thus,~$\varepsilon$-orbits of any point belonging to this intersection cannot leave the resonant zone with neither forward nor backward iterations (isolated resonance). It means that an elliptic point of a typical two-dimensional reversible  diffeomorphism is stable under permanently acting perturbations (Lyapunov stability by~$\varepsilon$-orbits).}}
		\label{fig:elliptic_resonance}
\end{figure}

\section*{Conclusions}
	
In the present paper we have obtained a series of new results devoted to the structure of the 1:3 resonance in conservative cubic H\'enon maps and 
their reversible perturbations. 
First, we have used two methods to construct reversible non-conservative perturbations of the conservative cubic H\'enon maps. The first method provides perturbations of the second iteration of the inverse cubic H\'enon maps in the so-called cross-form, while the second more delicate method gives perturbations of the cubic H\'enon maps themselves. In both cases, we have proved that the resulting perturbed perturbations preserve reversibility. Second, we have considered the conservative cubic H\'enon maps~$H^+_3$ and~$H^-_3$ as examples and studied the influence of reversible non-conservative perturbations on the structure of bifurcations of the 1:3 resonance. We have provided a detailed analysis of these bifurcations in the perturbed maps. We have focused on local symmetry-breaking bifurcations which have led to the appearance of nonsymmetric orbits. These bifurcations are reversible pitchfork bifurcations of 3-periodic orbits, and we have found the domains of parameters corresponding to nonsymmetric orbits. Moreover, for perturbations of~$H^+_3$, there appear nonsymmetric asymptotically stable and completely unstable 3-periodic orbits, while in perturbed~$H^-_3$ there emerge two nonsymmetric saddle 3-periodic orbits, one with the Jacobian greater than 1 and the other with the Jacobian less than 1. The presence of these nonsymmetric orbits leads to the existence of mixed dynamics in maps with cubic homoclinic tangencies whose first return maps are~$H^\pm_3$. Third, for the unperturbed conservative maps~$H_3^+$ and~$H_3^-$ with~$M_1=0$, we have demonstrated that all~$p:q$ resonances are degenerate when~$q>3$ is odd. And finally, we have provided a numerical evidence of mixed dynamics in perturbed~$H^-_3$, since a heteroclinic configuration between these nonsymmetric saddle orbits implies  the emergence of a Lamb-Stenkin non-transversal heteroclinic cycle.
	
These results can be used for further study of mechanisms of the appearance of mixed dynamics after a break-down of conservative dynamics. 
As pointed out in the introduction, the maps~(\ref{eq:HenonMapCubic}) are related to the study of cubic homoclinic tangencies and the phenomenon of mixed dynamics, the third (and the last) type of chaos. 
We have shown some global and local mechanisms of its emergence. The global ones are connected with the presence of homoclinic and heteroclinic cycles of different kinds, 
while one of the interesting local mechanisms is related to bifurcations of resonances among which we highlight the 1:3 resonance.
It is easy to associate the bifurcation structure of the cubic H\'enon maps with the bifurcations which take place near the cubic homoclinic tangencies~\cite{GonGonOvs2017}. In the present paper we have proposed a local mechanism corresponding to the occurrence of degenerate resonances in the cubic H\'enon maps. In the reversible context, symmetry-breaking pitchfork bifurcations of 3-periodic orbits lead to the appearance of pairs of nonsymmetric and non-conservative periodic orbits (periodic sinks and sources, periodic saddles with the Jacobians greater and less than 1) near the degenerate 1:3 resonant point. In this regard, we think that it is of great importance to consider the problem of local 1:3 resonance, especially the degeneracy~$a_{02}=0$ in~(\ref{eq:1p3nf}), and the accompanying symmetry-breaking bifurcations in general reversible maps, since degenerate resonances in reversible systems are the main local mechanism of the appearance of mixed dynamics.
It is also worth mentioning that the similar problem for the 1:4 resonance, the structure of bifurcations associated with fixed points with eigenvalues~$e^{\pm \pi/2}=\pm i$ and, consequently, 4-periodic orbits, is of great interest as well. An exhaustive study of 1:4 resonance for~(\ref{eq:HenonMapCubic}) was done in~\cite{GonGonOvs2017, GGOV18}, see also~\cite{MGon05}. It was also established in these works that for some~$(M_1,M_2)$, the 1:4 resonance can be degenerate and the 4-periodic orbits are subject to pitchfork bifurcations. The study of reversible non-conservative perturbations for this case is planned in a forthcoming paper.
	
\section*{Acknowledgments}
	
The authors thank S.~V.~Gonchenko, D.~Turaev, and K.\,Safonov for fruitful discussions. 
This paper was supported by the RSF grant No. 17-11-01041. Numerical experiments in Section~\ref{sec:md} were supported by the Laboratory of Dynamical Systems and Applications NRU HSE, of the Russian Ministry of Science and Higher Education (Grant No. 075-15-2019-1931). Section~\ref{sec:1p3resUnpert} was supported by the RSF grant No. 19-11-00280.
M.\,Gonchenko is partially supported by Juan de la Cierva-Incorporaci\'on fellowship IJCI-2016-29071 and the Spanish grant PGC2018-098676-B-I00 (AEI/FEDER/UE). A. Kazakov and E. Samylina also acknowledge RFBR project 18-29-10081 and the Theoretical Physics and Mathematics Advancement Foundation “BASIS” for financial support of scientific investigations.


\begin{thebibliography}{99}
		

\bibitem[AB85]{AnBr85}
D. V. Anosov, I. U. Bronshtein. Smooth dynamical systems. Chapter 3. Itogi Nauki i Tekhniki. Seriya ``Sovremennye Problemy Matematiki. Fundamental'nye Napravleniya'', 1, 204-229,   1985.


\bibitem[AKN06]{ArnKozNeisht07}
V. I. Arnold, V. V. Kozlov, A. I. Neishtadt. Mathematical aspects of classical and celestial mechanics. [Dynamical systems. III]. Translated from the Russian original by E. Khukhro. Third edition. Encyclopaedia of Mathematical Sciences, 3. \emph{Springer-Verlag}, Berlin, 2006.
		%
\bibitem[Arn88]{Arn-Geom}
V.I. Arnold. Geometrical Methods in the Theory of Ordinary Differential Equations.  \emph{Springer}; 2nd edition, 1988.
%
\bibitem[AS20]{ArSch20}
G. Ariel, J. Schiff. Conservative, dissipative and super-diffusive behavior of a particle propelled in a regular flow. \emph{Physica D: Nonlinear Phenomena}, p. 132584, 2020.
		%
\bibitem[BCR15]{BessaCR2015}
M. Bessa, M. Carvalho, Alexandre Rodrigues. Generic area-preserving reversible diffeomorphisms. \emph{Nonlinearity}, 28(6):1695–1720, 2015.

\bibitem[BHJVW03]{BroerHeinsmann}
H. Broer, H. Han\ss mann, \`A Jorba, J. Villanueva, F. Wagener. Normal-internal resonances in quasi-periodically forced oscillators: a conservative approach. \emph{Nonlinearity} 16(5):1751--1791, 2003.
		%
\bibitem[Bir87]{Bir87}
V. S. Biragov. Bifurcations in a two-parameter family of conservative mappings that are close to the H\'enon mapping. (Russian) [Translated in \emph{Selecta Math. Soviet.} 9(3):273--282, 1990.] \emph{Methods of the qualitative theory of differential equations}, 10–24, 1987
		%
\bibitem[BM20a]{BizMam20a}
I. A. Bizyaev, I. S. Mamaev. Separatrix splitting and nonintegrability in the nonholonomic rolling of a generalized Chaplygin sphere. \emph{International Journal of Non-Linear Mechanics}, 126:103550, 2020.
%
\bibitem[BM20b]{BizMam20b}
I. A. Bizyaev, I. S. Mamaev. Dynamics of the nonholonomic Suslov problem under periodic control: unbounded speedup and strange attractors. \emph{Journal of Physics A: Mathematical and Theoretical}, 53(18):185701, 2020.
%
\bibitem[Con78]{Conley78}
C. C. Conley. Isolated invariant sets and the Morse index. \emph{American Mathematical Soc.}, No. 38, 1978.
		%
\bibitem[DGG14]{DGG14} A. Delshams, M. Gonchenko, S.V. Gonchenko. On bifurcations of area-preserving and nonorientable maps with quadratic homoclinic tangencies. \emph{Regular and Chaotic Dynamics}, 19(6):702-717, 2014.
		%
\bibitem[DGG15]{DGG15}
A. Delshams, M. Gonchenko, S. Gonchenko. On dynamics and bifurcations of area-preserving maps with homoclinic tangencies. \emph{Nonlinearity}, 28(9):3027--3071, 2015.
		%
\bibitem[DGG16]{DelshamsGG16}
A.~Delshams, M.~Gonchenko, and P.~Guti\'errez.
\newblock Exponentially small splitting of separatrices and transversality
associated to whiskered tori with quadratic frequency ratio.
\newblock {\em SIAM J. Appl. Dyn. Syst.}, 15(2):981--1024, 2016.

\bibitem[DGG20]{DelshamsGG20}
A. Delshams, M. Gonchenko, P. Guti\'errez. Exponentially small splitting of separatrices associated to 3D whiskered tori with cubic frequencies. \emph{Comm. Math. Phys.}, 378(3):1931-1976, 2020.

\bibitem[DGGL18]{DGGL18}
A. Delshams, M. Gonchenko, S.V. Gonchenko, J.T. L\'azaro. Mixed dynamics of two-dimensional reversible maps with a symmetric couple of quadratic homoclinic tangencies. \emph{Discrete and Continuous Dynamical Systems}, 38(9):4483--4507, 2018.
		%
\bibitem[DGGLS13]{GonDelsh2013}
A. Delshams, S. V. Gonchenko, V. S. Gonchenko, J. T. Lazaro, O. Stenkin. Abundance of attracting, repelling and elliptic periodic orbits in two-dimensional reversible maps. \emph{Nonlinearity}, 26(1):1--33, 2013.
		%
\bibitem[DM00]{DulinMeiss2000}
H. R. Dullin, J. D. Meiss. Generalized H\'enon maps: the cubic diffeomorphisms of the plane. \emph{Physica D: Nonlinear Phenomena}, 143(1):262--289, 2000.
		%
		%
		%
\bibitem[GG09]{GG09}
S.V. Gonchenko, M.S. Gonchenko.  On cascades of elliptic periodic  points in two-dimensional 	 symplectic maps with homoclinic tangencies. \emph{Regular and Chaotic Dynamics}, 14(1):116--136, 2009.
		%
\bibitem[GGK13]{GonGonKaz2013}
A. S. Gonchenko, S. V. Gonchenko, A. O. Kazakov. Richness of chaotic dynamics in nonholonomic models of a Celtic stone. \emph{Regular and Chaotic Dynamics}, 18(5):521--538, 2013.
		%
\bibitem[GGK20]{GonchenkoGK20}
S.V. Gonchenko, A.S. Gonchenko, A.O. Kazakov.
On three types of attractors and mixed dynamics in nonholonomic models of rigid body dynamics. \emph{Proceedings of the Steklov Institute of Mathematics} 2020. Vol. 308. pp. 125-140.
		%
\bibitem[GGKT17]{GonGonKazTur2017}
A. S. Gonchenko, S. V. Gonchenko, A. O. Kazakov, D. V. Turaev. On the phenomenon of mixed dynamics in Pikovsky-Topaj system of coupled rotators. \emph{Physica D: Nonlinear Phenomena}, 350:45--57, 2017.
		%
\bibitem[GGO17]{GonGonOvs2017}
M. Gonchenko, S. Gonchenko, I. Ovsyannikov. Bifurcations of cubic homoclinic tangencies in two-dimensional symplectic maps. \emph{Mathematical Modelling of Natural Phenomena}, 12(1):41--61, 2017.
		%
\bibitem[GGOV18]{GGOV18}
M. Gonchenko, S. Gonchenko, I. Ovsyannikov, A. Vieiro. On local and global aspects of the 1:4 resonance in the conservative cubic H\'enon maps \emph{Chaos: An Interdisciplinary Journal of Nonlinear Science}, 28(4):043123 [15 pp], 2018.
		%
\bibitem[GGS21]{GonchenkoGSaf20}
M.\,S.\,Gonchenko, S.\,V.\,Gonchenko, K.\,Safonov. Reversible perturbations of conservative H\'enon-like maps. \emph{Discrete and Continuous Dynamical Systems},  41(4): 1875-1895, 2021.

\bibitem[GGS20]{GonchenkoGSin20}
S. V. Gonchenko, M. Gonchenko, I. O. Sinitsky. On the mixed dynamics of two-dimensional reversible diffeomorphisms with symmetric heteroclinic contours (Russian). \emph{Izvestiya Rossiiskoi Akademii Nauk. Seriya Matematicheskaya}, 84(1):23-51, 2020.
		%
		%
\bibitem[GLRT14]{GLRT14}
S. V. Gonchenko,  J. S. W. Lamb,  I. Rios, D. Turaev. Attractors and repellers in the neighborhood of elliptic points of reversible systems. (Russian) \emph{Dokl. Akad. Nauk}, 454(4):375--378, 2014; translation in \emph{Dokl. Math.} 89(1):65-–67,  2014.
		%
\bibitem[Gon85]{Gonchenko85} 
S.V. Gonchenko. On a two parameter family of systems close to a system with a nontransversal Poincar\'e homoclinic curve. I. (Russian) [Translated in \emph{Selecta Math. Soviet.}, 10(1):69-80, 1991] {\em Methods of Qualitative Theory of Differential Equations}, 55-72, 1985.
		%
\bibitem[Gon05]{MGon05}
M.S. Gonchenko. On the structure of 1:4 resonances in H\'enon maps, \emph{Internat. J. Bifur. Chaos Appl. Sci. Engrg}, 15(11):3653-3660, 2005.
		%
\bibitem[Gon16]{Gon2016}
S. Gonchenko. Reversible Mixed Dynamics: A Concept and Examples. \emph{Discontinuity, Nonlinearity, and Complexity}, 5(4):345-–354, 2016.
		%
%
		%
\bibitem[GST97]{GonShilTur97}
S. V. Gonchenko, L. P. Shilnikov, D. V. Turaev. On Newhouse domains of two-dimensional diffeomorphisms which are close to a diffeomorphism with a structurally unstable heteroclinic cycle. \emph{Proc. Steklov Inst. Math.}, 216:70--118, 1997.

\bibitem[GSV13]{GonSimoViero13}
S.V. Gonchenko, C. Simo, A. Vieiro. Richness of dynamics and global bifurcations in systems with a homoclinic figure-eight. \emph{Nonlinearity}, 26(3):621, 2013.

		%
\bibitem[GT17]{GonTur2017}
S. Gonchenko, D. Turaev. On three types of dynamics, and the notion of attractor. \emph{Trudy Matematicheskogo Instituta imeni V.A. Steklova}, 297:133--157, 2017.
		%
\bibitem[EN19]{EmelianovaN19}
A. A. Emelianova, V. I. Nekorkin.  On the intersection of a chaotic attractor and a chaotic repeller in the system of two adaptively coupled phase oscillators. \emph{Chaos: An Interdisciplinary Journal of Nonlinear Science}, 29(11):111102, 2019.

\bibitem[EN20]{EmelianovaN20}
A. A. Emelianova, V. I. Nekorkin. The third type of chaos in a system of two adaptively coupled phase oscillators. \emph{Chaos: An Interdisciplinary Journal of Nonlinear Science}, 30(5):051105, 2020.

\bibitem[EN21]{EmelianovaN21}
A. A. Emelianova, V. I. Nekorkin. Emergence and synchronization of a reversible core in a system of forced adaptively coupled Kuramoto oscillators. \emph{Chaos: An Interdisciplinary Journal of Nonlinear Science}, 31(3):033102, 2021.

		%
		%
		%
		%
\bibitem[Kaz13]{Kaz2013}
A.\,O. Kazakov. Strange attractors and mixed dynamics in the problem of an unbalanced rubber ball rolling on a plane. \emph{Regular and Chaotic Dynamics}, 18(5):508--520, 2013.

\bibitem[Kaz19]{Kaz19}
A. O. Kazakov. 	On the Appearance of Mixed Dynamics as a Result of Collision of Strange Attractors and Repellers in Reversible Systems, \emph{Radiophysics and Quantum Electronics}, pp. 1-9, 2019.
%
		%
\bibitem[Kaz20]{Kaz20}
A. Kazakov.  Merger of a H\'enon-like attractor with a H\'enon-like repeller in a model of vortex dynamics. \emph{Chaos: An Interdisciplinary Journal of Nonlinear Science},  30(1):011105, 2020.
		%
\bibitem[Kuz95]{Kuz}
Yu. Kuznetsov. Elements of applied bifurcation theory. Applied Mathematical Sciences, 112. \emph{Springer-Verlag, New York}, 1995.
		%
\bibitem[Kuz17]{Kuz17}
S. P. Kuznetsov. Regular and chaotic motions of the Chaplygin sleigh with periodically switched location of nonholonomic constraint. \emph{Europhysics Letters}, 118(1):10007, 2017.
		%
\bibitem[Kuz18]{Kuz18}
S. P. Kuznetsov. Regular and chaotic dynamics of a Chaplygin Sleigh due to periodic switch of the nonholonomic constraint. \emph{Regular and Chaotic
	Dynamics}, 23(2):178-192,  2018.
		%
		%
		%
\bibitem[LS04]{LambStenkin2004}
J.\,S.\,W. Lamb, O. V. Stenkin. Newhouse regions for reversible systems with infinitely many stable, unstable and elliptic periodic orbits. \emph{Nonlinearity}, 17(4):1217--1244, 2004.
		%
\bibitem[LT12]{LermanTuraev2012}
L. M. Lerman, D. Turaev. Breakdown of symmetry in reversible systems. \emph{Regular and Chaotic Dynamics}, 17(3-4):318--336, 2012.
		%
\bibitem[Mal44]{Mal44}
I. G. Malkin. On the stability under uniformly influencing perturbations. \emph{Prikl. matem. i mech}, 8(3):241-245, 1944.

		%
		%
\bibitem[QR92]{QuispelRoberts92}
{J.\,A.\,G. Roberts, G.\,R.\,W. Quispel.} Chaos and time-reversal symmetry. Order and chaos in reversible dynamical systems. \emph{Physics Reports}, 216(2-3):63--177, 1992.
%
\bibitem[Rue81]{Ruelle81}
D. Ruelle. Small random perturbations of dynamical systems and the definition of attractors. \emph{Communications in Mathematical Physics}, 82(1):137-151, 1981.
%
\bibitem[Sev86]{Sevryuk86}
M. B. Sevryuk,  Reversible systems. \emph{Springer}, v. 1211, 2006.
		%
\bibitem[SSK17]{SamylinaShyhmKazakov2017}
E.\,A.\,Samylina, A.\,I.\,Shyhmamedov, A.\,O.\,Kazakov. On the structure of resonances under reversible perturbations of conservative cubic Henon maps. \emph{Dinamicheskie Sistemy}, 7/35(3):229--244, 2017.
		%
\bibitem[SV09]{SV09}
C. Sim\'o, A. Vieiro. Resonant zones, inner and outer splitting in generic and low order resonances of area preserving maps. \emph{Nonlinearity}, 22(5):1191--1245, 2009.

\bibitem[Tur10]{Turaev10}
D. Turaev.  Richness of chaos in the absolute Newhouse domain. \emph{Proceedings of the International Congress of Mathematicians 2010 (ICM 2010)} (In 4 Volumes) Vol. I: Plenary Lectures and Ceremonies Vols. II-IV: Invited Lectures, pp. 1804-1815, 2010.

\bibitem[Tur15]{T15}
D. Turaev. Maps close to identity and universal maps in the Newhouse domain. \emph{Comm. Math. Phys.}, 335:1235-1277, 2015.





%
        %
        %
        %
        %







\end{thebibliography}
\end{document}